\documentclass[a4paper]{amsart}

\usepackage{amssymb,amsbsy,amsmath,amsfonts,amssymb,amscd}
\usepackage{latexsym}
\usepackage{graphics}
\usepackage{color}
\input xy
\xyoption{all}

\theoremstyle{plain}
\newtheorem{thm}{Theorem}[section]
\newtheorem{theorem}[thm]{Theorem}

\newtheorem{lemma}[thm]{Lemma}

\newtheorem{proposition}[thm]{Proposition}
\theoremstyle{definition}
\newtheorem{remark}[thm]{Remark}

\newtheorem{defin}[thm]{Definition}

\newtheorem{assumption}[thm]{Assumption}

\newtheorem{example}[thm]{Example}

\numberwithin{equation}{section}

\newcommand{\sI}{{\mathcal I}}

\newcommand{\sN}{{\mathcal N}}



\def\eea{\end{eqnarray*}}
\def\bea{\begin{eqnarray*}}

\newcommand\dual{\mathrel{\raise3pt\hbox{$\underline{\mathrm{\thinspace d
\thinspace}}$}}}
\newcommand\qe{\ifhmode\unskip\nobreak\fi\quad $\Box$}       

\def\BOX{\hfill\lower.5\baselineskip\hbox{$\Box$}}

\newtheorem{theo}{Theorem}[section]

\newtheorem{remarkk}[theo]{Remark}

\DeclareMathOperator{\Aut}{Aut}


%
%
%


\setcounter{section}{-1}



\title [Stable curves]{Cyclic covers of Stable curves and their moduli spaces}

\author{Binru Li }

\address {Shanghai Center for Mathematicial Science, Fudan University.\\
	220 Handan road, Yangpu District.\\
	200433 Shanghai.
	Tel: +86 55665384}
\email{binruli@fudan.edu.cn}

\subjclass[2010]{14D22,14H10,14H15,32G15}

\keywords{Cyclic covering, G-marked stable curves, Boundary of moduli space, Techimullar space.}

\thanks{The author is currently supported by Shanghai Center for Mathematical Science (SCMS), Fudan University. Part of this work took place in the framework of the ERC Advanced grant n. 340258, `TADMICAMT'.}

\begin{document}

\maketitle

\begin{abstract}	
We study the deformation of $G$-marked stable curves in the case where $G$ is a cyclic group, and construct a parameterizing space for $G$-marked stable curves of a given numerical type.

This is then used in order to study the components of the locus of stable curves admitting the action of a cyclic group of non prime order $d$,
extending work of F. Catanese in the case where $d$ is prime.
\end{abstract}
\section{Introduction}
The purpose of this article is to study the structure of the locus $(\overline{\mathfrak{M}_g}-\mathfrak{M}_g)(G)$ of (non-smooth) stable curves of genus $g$ inside the compactified moduli space $\overline{\mathfrak{M}_g}$ which admitting an effective action by a cyclic group $G$. 

In \cite{Cor87} and \cite{Cor08} M. Cornalba determined the irreducible components of $Sing(\mathfrak{M}_g)$, the singular locus of the moduli scheme of smooth projective curves of genus $g\geq 2$. The result was obtained by showing that $\mathfrak{M}_g(\mathbb{Z}/p)$, the locus inside $Sing(\mathfrak{M}_g)$ of curves admitting an effective action by a cyclic group of prime order $p$, is irreducible and maximal (i.e. being not contained in another locus) except for finitely many cases. The main ingredient Cornalba used is that the locus corresponding to cyclic covers of prime order of smooth curves with a fixed combinatorial datum, called the numerical type (see Definition \ref{smnumtype}), is an irreducible Zariski closed subset of the moduli space $\mathfrak{M}_g$. Catanese in \cite{Cat12} extended this result to the case of cyclic groups of any order (cf. Theorem \ref{smpara}).

The studies of such loci can be continued in two directions: 
 
In one direction more finite groups $G$ are considered. For instance the case where $G=D_n$, the dihedral group of order $2n$, was investigated in a series of papers by F. Catanese, M. L{\"o}nne and F. Perroni (cf. \cite{CLP11}, \cite{CLP15}) and later by B. Li and S. Weigl (cf. \cite{LW16}). The main difficulty there is that for general groups a numerical type might correspond to a reducible subset of the moduli space. In \cite{CLP15} the authors introduced a new homological invariant which enables them to distinguish the irreducible components asymptotically (i.e., when the genus of the quotient curve $>>0$).

The other direction is to consider the boundary of the compactified moduli space $\overline{\mathfrak{M}_g}$. In \cite{Cat12}, Catanese determined the irreducible components of $Sing(\overline{\mathfrak{M}_g}-\mathfrak{M}_g)$ by studying the loci $(\overline{\mathfrak{M}_g}-\mathfrak{M}_g)(\mathbb{Z}/p)$ and obtained analogous results as in the smooth case. In this case, the locus of stable curves with a given numerical type is not necessarily Zariski closed: if a stable curve $C_1$ is smoothable to another stable curve $C_2$, then the corresponding locus of $C_1$ is contained in the closure of that of $C_2$, hence one should look at the non-smoothable stable curves. Hence in the boundary case the notion of maximal means that the Zariski closure of the locus is maximal (cf. Definition \ref{stratum}).

In this article we go into both directions, studying the loci $(\overline{\mathfrak{M}_g}-\mathfrak{M}_g)(\mathbb{Z}/d)$ of non-prime order $d$ and generalize several results in \cite{Cat12}. 

\medskip

This article is organized as follows.

In section 1 we give the definition of a $G$-marked stable curve (i.e., a stable curve $C$ admitting an effective action by a finite group $G$, cf. Definition \ref{Gcurve}) and associated notions.

In section 2 we study the $G$-equivariant deformation (cf. Definition \ref{equidef}) of $G$-marked stable curves, and determine when a $G$-marked stable curve is $G$-equivariantly smoothable. Then we define the associated numerical type for $G$-marked stable curves and prove the main result of this section that, for $G$-marked stable curves with a given numerical type, there is a parameterizing space (cf. Theorem \ref{thm1}):
 \begin{theorem}\label{thm1}
 	Given a $G$-marked stable curve $(C,G,\rho)$, there exists a connected complex manifold $\mathcal{T}_{[\mathcal{D}(C,G,\rho)]}$ parameterizing all $G$-marked stable curves with numerical type $[\mathcal{D}(C,G,\rho)]$.\\
 	If moreover $(C,G,\rho)$ is $G$-equivariantly non-smoothable, denoting by $\mathfrak{M}_{[\mathcal{D}(C,G,\rho)]}$ the image of the natural morphism $\mathcal{T}_{[\mathcal{D}(C,G,\rho)]}\rightarrow \overline{\mathfrak{M}_g}-\mathfrak{M}_g $, then each point inside $\mathfrak{M}_{[\mathcal{D}(C,G,\rho)]}$ has finite inverse image in $\mathcal{T}_{[\mathcal{D}(C,G,\rho)]}$, and the closure $\overline{\mathfrak{M}_{[\mathcal{D}(C,G,\rho)]}}$ consists of $G$-marked stable curves which can be $G$-equivariantly deformed into a curve with numerical type $[\mathcal{D}(C,G,\rho)]$.  
 \end{theorem}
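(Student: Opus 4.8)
The plan is to mimic the construction in the smooth case (Theorem \ref{smpara}) by passing to the quotient and building $\mathcal{T}_{[\mathcal{D}(C,G,\rho)]}$ out of Teichm\"uller spaces attached to the components of $C/G$. First I would form the quotient $\bar C := C/G$, a nodal curve whose normalization is a disjoint union of smooth components $\bar C_v$ indexed by the vertices $v$ of the dual graph recorded in $\mathcal{D}(C,G,\rho)$. On each $\bar C_v$ the numerical type prescribes a finite set of marked points --- the images of the branch locus together with the preimages of the nodes --- and a local monodromy datum in $G$ at each of them; since $G$ is cyclic, fixing this datum rigidifies the cover, so each marked $\bar C_v$ determines, and is determined by, a point of the Teichm\"uller space $\mathcal{T}_v$ of the corresponding punctured Riemann surface. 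These $\mathcal{T}_v$ are the basic building blocks.

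Next I would set $\mathcal{T}_{[\mathcal{D}(C,G,\rho)]} := \prod_v \mathcal{T}_v$ and reconstruct, over each point, a $G$-marked stable curve: build the cyclic $G$-cover of each marked component via the prescribed monodromy, then glue the resulting components along the nodes according to the local action data recorded in $\mathcal{D}(C,G,\rho)$ (this gluing is exactly the $G$-equivariant analogue of plumbing, controlled by the deformation theory of Section 2). Conversely, any $G$-marked stable curve of numerical type $[\mathcal{D}(C,G,\rho)]$ has the same combinatorial and local data, so its quotient yields a point in each $\mathcal{T}_v$, hence a point of the product; this shows that $\mathcal{T}_{[\mathcal{D}(C,G,\rho)]}$ parameterizes all such curves. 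Connectedness is then immediate: each $\mathcal{T}_v$ is a cell, hence connected, and a finite product of connected complex manifolds is a connected complex manifold. This establishes the first assertion.

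For the second assertion, assume $(C,G,\rho)$ is $G$-equivariantly non-smoothable, so that by the criterion of Section 2 no point of $\mathcal{T}_{[\mathcal{D}(C,G,\rho)]}$ lies over a smooth curve and the natural morphism $\pi$ indeed lands in $\overline{\mathfrak{M}_g}-\mathfrak{M}_g$. Finiteness of the fibres follows from the finiteness of $\Aut(C')$ for a stable curve $C'$ of genus $g\geq 2$: a point of $\mathcal{T}_{[\mathcal{D}(C,G,\rho)]}$ over $[C']$ amounts to an embedding $G \hookrightarrow \Aut(C')$ realizing the prescribed numerical type, together with a Teichm\"uller marking, and there are only finitely many such embeddings up to the relevant marking equivalence. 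Hence $\pi^{-1}([C'])$ is finite.

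It remains to identify the closure $\overline{\mathfrak{M}_{[\mathcal{D}(C,G,\rho)]}}$. One inclusion is easy: if a $G$-marked stable curve $C'$ admits a $G$-equivariant deformation whose general fibre has numerical type $[\mathcal{D}(C,G,\rho)]$, then those general fibres lie in $\mathfrak{M}_{[\mathcal{D}(C,G,\rho)]}$ and $C'$ is their limit, so $[C'] \in \overline{\mathfrak{M}_{[\mathcal{D}(C,G,\rho)]}}$. The reverse inclusion is where the main difficulty lies: given $[C'] \in \overline{\mathfrak{M}_{[\mathcal{D}(C,G,\rho)]}}$ one has a family of curves of type $[\mathcal{D}(C,G,\rho)]$ degenerating to $C'$, and I must produce a compatible $G$-action on $C'$ making this degeneration $G$-equivariant. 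The key input is that the action specializes --- the automorphism group scheme is proper over the base, so the cyclic action on the general fibres extends to $C'$ --- exhibiting $C'$ as a $G$-equivariant degeneration of curves of type $[\mathcal{D}(C,G,\rho)]$. The hard part is to control this limiting action: to verify that it remains effective and that the specialization of the numerical type is governed precisely by the $G$-equivariant deformation theory of Section 2 rather than by some extraneous degeneration. This is exactly the point at which the equivariant deformation analysis and the non-smoothability hypothesis must be brought to bear.
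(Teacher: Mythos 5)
Your construction is essentially the paper's: it too builds $\mathcal{T}_{[\mathcal{D}(C,G,\rho)]}$ as a product of Teichm\"uller spaces $\mathcal{T}_{h_i,r_i}$ indexed by the $G$-orbits of components (organized there via the partition $I=I_0\cup I_1\cup I_2$), reconstructs the $G$-action fibrewise (using the canonical form of Lemma \ref{canact} to pin down the action on a nontrivial orbit), glues along sections according to the numerical type, and obtains finiteness of the fibres and the description of the closure by the same brief arguments you give. The only substantive item you omit is the dimension count identifying $\dim\mathcal{T}_{[\mathcal{D}(C,G,\rho)]}$ with $\dim \Ext^1(\Omega^1_C,\mathcal{O}_C)^G$ in the non-smoothable case, which is not needed for the literal statement (your direct converse construction replaces it) but is what the paper relies on when comparing strata later.
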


In section 3 we study the irreducible components of $(\overline{\mathfrak{M}_g}-\mathfrak{M}_g)(G)$ for the case $G=\mathbb{Z}/d$, the idea is to determine when a $G$-stratum (i.e., the image inside $(\overline{\mathfrak{M}_g}-\mathfrak{M}_g)(\mathbb{Z}/d)$ of the parameterizing space of a given numerical type, cf. Definition \ref{stratum}) is maximal. For this we need to compare all the order $d$ cyclic subgroups of the stratum (cf. Definition \ref{StrataAut}).\\ 
Due to some phenomena arising from the smooth case (cf. Proposition \ref{nonfull}), the automorphism group of a stratum might become very complicated, making it impossible to give a brief and explicit description for maximal strata. Hence we make some technical assumptions.
\begin{assumption}(cf. Assumption \ref{assum})
	\begin{itemize}
		\item[(0)] $(C=\sum_{i \in I}C_i,G,\rho)$ is $G$-equivariantly non-smoothable.
		\item[(1)] For a general stable curve $(C,G,\rho)$ in the stratum we have $H_i=Aut(\widetilde{C_i})$ and $g(\widetilde{C_i})\geq 2$ for all $i$.
		\item[(2)] For any $i\in I$, the parameterizing space $\mathcal{T}_{h_i,r_i}$ has dimension$>0$.
	\end{itemize}
\end{assumption}
With the above assumptions we prove the main result of this article (cf. Theorem \ref{mainthm}):
\begin{theorem}
	 Under the conditions of Assumption \ref{assum}, we have the following:
\begin{itemize}
	\item[(1)] For a $G$-equvariantly non-smoothable $G$-marked stable curve $(C=\sum_{i\in I}C_i,G,\rho)$, the induced stratum $\mathfrak{M}_{C'}$, where $C'=C/G$, is maximal iff for a general stable curve (by abuse of notation we denote still by) $(C,G,\rho)$ in the stratum:
	\begin{itemize}
		\item[(a)] The cases in Lemma \ref{case 1} do not occur.
		\item[(b)]  For any $\beta\in \Aut(C)$ (of order $d$) and any node $p$ where Case (II-i) happens, the following holds:
		$$\zeta_{p,1}^{b(p,1)c(\beta,p,1)}\zeta_{p,2}^{b(p,2)c(\beta,p,2)}\neq 1.$$
	\end{itemize}
    \item[(2)] The Zariski closure of each maximal stratum in $(1)$ is an irreducible component of $(\overline{\mathfrak{M}_g}-\mathfrak{M}_g)(G)$.
\end{itemize}
\end{theorem}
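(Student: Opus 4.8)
The plan is to translate the maximality of $\mathfrak{M}_{C'}$ into a purely local condition at the nodes of a general curve $(C,G,\rho)$ in the stratum, and then to match that local condition against the explicit list in Lemma \ref{case 1} and the character product in (b). By Theorem \ref{thm1} the closure of any stratum consists exactly of those curves that can be $G$-equivariantly deformed into the corresponding numerical type, and the morphism $\mathcal{T}_{[\mathcal{D}]}\to\overline{\mathfrak{M}_g}-\mathfrak{M}_g$ is finite onto its image; combined with the fact that for fixed genus $g$ there are only finitely many numerical types, this shows that $\mathfrak{M}_{C'}$ fails to be maximal if and only if a general curve $C$ in it lies in the closure of a strictly larger stratum. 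Since an unmarked boundary curve may carry several order-$d$ actions, such a containment can occur through \emph{any} cyclic subgroup $\langle\beta\rangle\leq\Aut(C)$ of order $d$: if the $\beta$-marked curve admits a nontrivial $\langle\beta\rangle$-equivariant partial smoothing, then $C$ sits in the closure of the higher-dimensional stratum of the smoothed curve. Thus maximality is equivalent to the statement that for \emph{every} order-$d$ automorphism $\beta$ of a general $C$, the marked curve $(C,\langle\beta\rangle)$ admits no nontrivial equivariant partial smoothing.

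First I would fix a general $(C,G,\rho)$ and, using Definition \ref{StrataAut}, list the finitely many order-$d$ cyclic subgroups $\langle\beta\rangle\leq\Aut(C)$; Assumption \ref{assum}(1) guarantees $H_i=\Aut(\widetilde{C_i})$ and $g(\widetilde{C_i})\geq 2$, so the normalizations are rigid and every equivariant deformation parameter is the smoothing parameter of a node. For each $\beta$ I would run through the node classification and record which nodes can be smoothed $\langle\beta\rangle$-equivariantly: a node is equivariantly smoothable precisely when the one-dimensional smoothing parameter is fixed by the stabilizer of the node, that is, when the two branch characters cancel. For a node $p$ of Case (II-i) this cancellation is exactly the relation $\zeta_{p,1}^{b(p,1)c(\beta,p,1)}\zeta_{p,2}^{b(p,2)c(\beta,p,2)}=1$, so condition (b) is the statement that \emph{no} Case-(II-i) node is equivariantly smoothable for any $\beta$. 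The remaining node types that could admit a smoothing are precisely those collected in Lemma \ref{case 1}, and condition (a) asserts that none of these configurations occurs. Assumption \ref{assum}(2) ensures every component-level parameter space is positive-dimensional, so a genuine smoothing always increases the dimension and the resulting stratum is strictly larger.

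The equivalence in (1) then follows from the dictionary above, argued in both directions. If (a) and (b) hold, then for every order-$d$ automorphism $\beta$ each node of $C$ is equivariantly rigid, so $(C,\langle\beta\rangle)$ has no nontrivial equivariant partial smoothing; hence $C$ lies in the closure of no larger stratum and $\mathfrak{M}_{C'}$ is maximal. Conversely, if (b) fails for some $\beta$ and some Case-(II-i) node $p$, then smoothing $p$ produces an explicit $\langle\beta\rangle$-equivariant deformation of $C$ to a curve $C_2$ of a new numerical type; by the finite-to-one statement of Theorem \ref{thm1} the stratum of $C_2$ has strictly larger dimension and contains $\mathfrak{M}_{C'}$ in its closure, so $\mathfrak{M}_{C'}$ is not maximal. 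The failure of (a) is handled identically using the degenerations produced in Lemma \ref{case 1}. The main obstacle here is the local bookkeeping: one must verify that the node classification is exhaustive, that Case (II-i) together with the cases of Lemma \ref{case 1} really capture all equivariant smoothings, and that no global obstruction prevents a locally allowed smoothing from being realized — this last point is where Assumption \ref{assum}(1) and the rigidity of the components are essential.

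Finally I would deduce part (2) formally. By Theorem \ref{thm1} the parameterizing space $\mathcal{T}_{[\mathcal{D}(C,G,\rho)]}$ is a connected complex manifold, hence irreducible, so its image $\mathfrak{M}_{C'}$ and the Zariski closure $\overline{\mathfrak{M}_{C'}}$ are irreducible. Because $g$ is fixed there are finitely many numerical types, so $(\overline{\mathfrak{M}_g}-\mathfrak{M}_g)(G)$ is the finite union of the closures of all strata, and every irreducible component of this locus is therefore the closure of some stratum. A maximal stratum is by definition contained in the closure of no other stratum, so $\overline{\mathfrak{M}_{C'}}$ is not properly contained in any irreducible closed subset of $(\overline{\mathfrak{M}_g}-\mathfrak{M}_g)(G)$ and is thus an irreducible component.
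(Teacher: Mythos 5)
Your proposal reduces maximality to the single mechanism of ``$\langle\beta\rangle$-equivariant partial smoothing for some order-$d$ automorphism $\beta$,'' and this is where it breaks. The paper's argument rests on Lemma \ref{fac}: for \emph{every} $\beta\in\Aut(C)$ of order $d$ the quotient $C\to C/G$ factors through $C\to C/\langle\beta\rangle$, so the stratum $\mathfrak{M}_{C'}$ is literally \emph{contained} in $\mathfrak{M}_{\beta}$ as a set --- no degeneration or smoothing is involved. Consequently there are two distinct ways maximality can fail (Remark \ref{Two cases}): either $\dim\mathfrak{M}_{\beta}>\dim\mathfrak{M}_{C'}$ outright, or the dimensions agree but $(C,\langle\beta\rangle)$ is equivariantly smoothable. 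Lemma \ref{case 1} and condition (a) characterize the \emph{first} alternative --- it is a dimension comparison governed by $\mu_o(\beta)$ (how many orbits $\overline{\beta_o}$ has on the components over $C'_o$) and by the index of $\langle\beta_o^{n_o}\rangle$ in $H_i$ via Proposition \ref{exception} --- not, as you assert, a list of ``remaining node types that could admit a smoothing.'' Your claimed equivalence is therefore false in one direction: if some $\beta$ satisfies $\mu_o(\beta)>1$, then $\mathfrak{M}_{C'}\subseteq\overline{\mathfrak{M}_{\beta}}$ with $\dim\mathfrak{M}_{\beta}>\dim\mathfrak{M}_{C'}$ and the stratum is not maximal, even though $(C,\langle\beta\rangle)$ may admit no equivariant smoothing at all; your criterion would wrongly certify such a stratum as maximal. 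You never establish the containment $\mathfrak{M}_{C'}\subseteq\mathfrak{M}_{\beta}$, and without it even the smoothing mechanism you do use is incomplete, since you must know that the new stratum's closure contains all of $\mathfrak{M}_{C'}$ and not merely the one curve $C$.

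A secondary but genuine error: you claim Assumption \ref{assum}(1) makes the normalizations $\widetilde{C_i}$ rigid so that ``every equivariant deformation parameter is the smoothing parameter of a node.'' This contradicts Assumption \ref{assum}(2), which demands $\dim\mathcal{T}_{h_i,r_i}>0$, and it inverts the actual situation: for a $G$-equivariantly non-smoothable curve one has $(\bigoplus_{P}\mathcal{E}xt^1(\Omega^1_C,\mathcal{O}_C)_P)^G=0$, so the equivariant deformations are exactly the locally trivial ones that move the components, and the positive-dimensionality of these is what makes the dimension comparisons in Lemma \ref{case 1} bite. Your treatment of condition (b) for Case (II-i) nodes and your derivation of part (2) from connectedness of $\mathcal{T}_{[\mathcal{D}(C,G,\rho)]}$ and finiteness of the set of numerical types are essentially the paper's, but the proof as a whole does not go through until the dimension-jump alternative is separated out and handled by Lemma \ref{fac}, Lemma \ref{case 1} and Proposition \ref{exception}.
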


\section{Notation}

Let $C$ be a (non-smooth) stable curve (i.e., $C$ has at most nodes as singularities and $Aut(C)$, the automorphism group of $C$, is finite), 
$$ C = \sum_{i \in I} C_i .$$

We define $\sI$ to be the graph whose set of vertices is the set $I$,
and whose set of edges is the set $\sN$ of  the nodes $P \in C$. 

We let $\sN_i : = \sN \cap C_i$, i.e., these are the edges of the graph containing the vertex  $i$. 

Note that if $P \in \sN, P  \in C_i \cap C_j, i\neq j,$
then $P$ yields  an edge connecting two distinct vertices,
else, if $ P \in C_i $ and $ P \notin C_j, \ \forall j \neq i$,  $P$ yields a loop based at $i$. Hence we have $$\mathcal{N}_i=\mathcal{N}^{(1)}_i\cup\mathcal{N}^{(2)}_i,$$
where $\mathcal{N}^{(1)}_i$ corresponds to edges connecting two distinct vertices (one of the vertex is $i$) and $\mathcal{N}^{(2)}_i$ corresponds to loops based at $i$.

Set further $ C- C_i =  \overline{C \setminus C_i}$. 

\begin{defin}\label{Gcurve}
\begin{itemize}
\item[(1)] Let $G$ be a finite group. A {\em $G$-marked stable curve} is a triple $(C,G,\rho)$, where $C$ is a stable curve, $\rho:G\hookrightarrow Aut (C)$ is an injective homomorphism, i.e., $G$ acts effectively on the stable curve $C$. When $\rho$ is clear, for instance if $G$ is a subgroup of $Aut(C)$, we write for short $(C,G)$ instead of $(C,G,\rho)$.
\item[(2)] We call $(C,G,\rho)$ a smooth (resp. irreducible) $G$-marked curve if $C$ is smooth (resp. irreducible).
	  \end{itemize}
\end{defin}
\begin{remark}
	In the case where $\rho$ is clear from the context, we identify $G$ with its image $\rho(G)$ and write $G\subset Aut(C)$.
\end{remark}
Given a $G$-marked curve $(C,G,\rho)$, then $G$ acts naturally on the graph $\sI$, and on the set $I$.

\begin{defin}
i) Let $K_v $ be the kernel of the action on $I$, and let instead $G_i$ be the stabilizer of $i \in I$; 
in other words, $$G_i := \{ g | g (C_i) = C_i \}$$ and $K_v = \cap_{i\in I} G_i$.

ii) Let $K$ be the kernel of the action on $\sI$, and let, for $ P \in \sN$, $G_P$ be the stabilizer of $P$;
hence $K = K_v \cap (\cap_{P \in \sN} G_P)$. We let moreover $G'_i$ be the subgroup of $G_i$ which
fixes the nodes in $\sN_i$, the nodes of $C$ belonging to $C_i$, and we let $G''_i$ be the subgroup which acts trivially on $C_i$.
Hence $ K = \cap_{i\in I}G'_i$. We denote by $n_i$ the order of $G/G_i$.

iii) In the case $G$ is an abelian group, let  $H_i$ be the quotient group $G_i / G''_i$, respectively $H_i' : = G'_i / G''_i$. Necessarily $H'_i$
is a cyclic subgroup. We denote by $d_i$ (resp. $d'_i$) the order of $H_i$ (resp. $H'_i$).

iv) Setting where $I_0=\{i\in I|G=G''_i \}$, $I_1=\{i\in I|G=G_i\}$ and $I_2=\{i\in I|G\neq G_i\}$, then the set $I$ has a natural partion $I=I_0\cup I_1\cup I_2$.
\end{defin}


In the rest of this article $G$ shall denote a cyclic group $\mathbb{Z}/d$ with generator $\gamma$ and $\zeta_{d}:=\exp(\frac{2\pi \sqrt{-1}}{d})$. We work over the field of complex numbers $\mathbb{C}$. 

\section{Parameterizing space of cyclic coverings}
In this section we will construct parameterizing spaces for $G$-marked stable curves, first we review the case of smooth $G$-marked curves.\\

Let $(C,G)$ be a smooth irreducible $G$-marked curve. The action of $G$ on $C$ induces a (ramified) covering map $C\rightarrow C':=C/G$. For any $1\leq i\leq d-1$, we set
$$D_i:=\{p\in C'|p\text{ is a branching point, }\forall q\in C\ \text{over}\ p, Stab(q)=\langle \gamma^{gcd(i,d)}\rangle,$$
$$\text{ locally around }q \text{ the action of }\gamma^{gcd(i,d)} \text{ is given by } z\mapsto \zeta_d^{d/gcd(i,d)}z,   \}$$

\begin{defin}[\cite{Cat12}, Definition 2.2]\label{smnumtype}
	Let $C$ be a smooth irreducible projective curve of genus $g$ on which $G=\mathbb{Z}/d$ acts faithfully, and set $C'=C/G$, $h:=genus(C')$.\\
	Denote by $k_i=deg(D_i)$ for $i=1,...,d-1$, and by $(k_1,...,k_{d-1})$ the {\em branching sequence} of $\gamma$. A change of generator of $\mathbb{Z}/d$ corresponds to a $(\mathbb{Z}/d)^*$-action on the set of sequences, we denote the resulting equivalence class by $[(k_1,...,k_{d-1})]$, and call it the {\em numerical type} of the cyclic cover $C\rightarrow C'$.
\end{defin}
\begin{defin}[\cite{Cat12}, Definition 2.3]\label{adm}
	Given a branching datum corresponding to a sequence $[(k_1,...,k_{d-1})]$, set
	$$h:=1+\frac{2(g-1)}{2d}-\frac{1}{2}\sum_{i=1}^{d-1}k_i(1-\frac{\gcd(i,d)}{d}).$$
	The branching datum is said to be {\em admissible} for $d$ and $g$ if the following two conditions are satisfied:\\
	(1) $\sum_{i=1}^{d-1} k_ii\equiv 0 ~($mod $d)$, \\
	(2) $h$ is a positive integer; or $h=0$, $\gcd\{d,\gcd \{i|k_i\neq 0\}\}=1$.
\end{defin}
The main result for the parameterizing space of smooth $G$-marked curves is the following:
\begin{thm}[\cite{Cat12}, Theorem 2.4]\label{smpara}
The paris $(C,G)$, where $C$ is a complex projective curve of genus $g\geq 2$, and $G$ is a finite cyclic group of order $d$ acting faithfully on $C$ with a given branching datum $[(k_1,...,k_{d-1})]$ are parameterizing by a connected complex manifold $\mathcal{T}_{g,d;[(k_1,...,k_{d-1})]}$ of dimension $3(h-1)+k$, where $k:=\sum_ik_i$.\\
The image $\mathfrak{M}_{g,d;[(k_1,...,k_{d-1})]}$ of $\mathcal{T}_{g,d;[(k_1,...,k_{d-1})]}$ inside the moduli space $\mathfrak{M}_g$ is a closed subset of the same dimension $3(h-1)+k$.
\end{thm}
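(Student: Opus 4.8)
The plan is to realize $\mathcal{T}_{g,d;[(k_1,\ldots,k_{d-1})]}$ as (a finite quotient of) the Teichm\"uller space of the $k$-punctured quotient curve, and to build the universal family of cyclic covers over it by transporting a fixed monodromy homomorphism. First I would translate the datum $(C,G)$ into quotient data. The faithful $\mathbb{Z}/d$-action yields the branched cover $\pi\colon C\to C'=C/G$ of a smooth curve $C'$ of genus $h$, branched over a finite set $B=\{b_1,\ldots,b_k\}\subset C'$ with $k=\sum_i k_i$, each $b_j$ carrying a local monodromy prescribed by the branching datum (exactly $k_i$ of the points having monodromy of type $i$). By the Riemann existence theorem for abelian covers, giving such a connected cover with these local monodromies is equivalent to giving a surjection $\mu\colon\pi_1(C'\setminus B)\onto\mathbb{Z}/d$ sending the positively oriented loop around $b_j$ to the prescribed generator of its local stabilizer; since $\mathbb{Z}/d$ is abelian, $\mu$ factors through $H_1(C'\setminus B,\mathbb{Z})$ and exists precisely when the local monodromies multiply to $1$, i.e. $\sum_i k_i\,i\equiv 0\ (\mathrm{mod}\ d)$, which is admissibility condition (1) of Definition \ref{adm}; condition (2) guarantees that $C$ is connected of the intended genus $g\geq 2$.

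Next I would fix a reference oriented surface $S$ of genus $h$ with $k$ punctures (hyperbolic, as forced by $g\geq 2$) and work over its Teichm\"uller space $\mathcal{T}_{h,k}$, a contractible complex manifold of dimension $3h-3+k=3(h-1)+k$. Because each point of $\mathcal{T}_{h,k}$ carries a marking, the identification $\pi_1(C'_t\setminus B_t)\cong\pi_1(S\setminus\{\text{punctures}\})$ is canonical along the family, so $\mu$ can be held constant; applying the cover construction fiberwise produces a holomorphic family $\mathcal{C}\to\mathcal{T}_{h,k}$ of smooth genus-$g$ curves, each equipped with the given $\mathbb{Z}/d$-action, hence a classifying map $\mathcal{T}_{h,k}\to\mathcal{T}_g$ into the Teichm\"uller space of genus $g$. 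Since the $G$-equivariant deformations of $C$ correspond bijectively (via $\pi$) to the deformations of the pointed quotient $(C',B)$, this map is an immersion onto a locally closed submanifold of the correct dimension $3(h-1)+k$.

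I would then define $\mathcal{T}_{g,d;[(k_1,\ldots,k_{d-1})]}$ as the quotient of $\mathcal{T}_{h,k}$ that forgets the labeling of the branch points within each type $i$ and the choice of generator $\gamma$ (the $(\mathbb{Z}/d)^{*}$-action of Definition \ref{smnumtype}). As these are actions of discrete groups on the \emph{contractible} source $\mathcal{T}_{h,k}$, and a quotient of a connected space is connected, the result is a connected complex manifold of the same dimension $3(h-1)+k$; the contractibility is precisely what lets me fix $\mu$ once and realize every cover of the given type from a single family, so that no spurious components arise. For the image in moduli, the composite $\mathcal{T}_{g,d;[(k_1,\ldots,k_{d-1})]}\to\mathcal{T}_g\to\mathfrak{M}_g$ has image $\mathfrak{M}_{g,d;[(k_1,\ldots,k_{d-1})]}$, which is therefore irreducible and constructible. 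To see it is Zariski closed I would identify it with the projection to $\mathfrak{M}_g$ of the fixed-point locus in $\mathcal{T}_g$ of the finite cyclic subgroup of the mapping class group determined by the $G$-action: such fixed loci are closed complex submanifolds of $\mathcal{T}_g$ (a consequence of the Nielsen realization theorem), and their images in $\mathfrak{M}_g$ are Zariski closed because automorphism groups of genus $\geq 2$ curves can only specialize upward while the branching datum, a discrete invariant of the $G$-character on $H^{0}(\omega_C)$, is locally constant. Finiteness of the fibers of $\mathcal{T}_{g,d;[(k_1,\ldots,k_{d-1})]}\to\mathfrak{M}_g$ (finitely many covers per quotient, finite $\Aut$) then yields $\dim\mathfrak{M}_{g,d;[(k_1,\ldots,k_{d-1})]}=3(h-1)+k$.

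The main obstacle I expect is connectedness: one must verify that the finitely many valid monodromy homomorphisms $\mu$ and the various branch-point labelings genuinely lie in one orbit under the combined mapping-class-group and $(\mathbb{Z}/d)^{*}$ action, so that the quotient of $\mathcal{T}_{h,k}$ is a single connected piece rather than a disjoint union. This is exactly where the contractibility of the Teichm\"uller space of the punctured quotient is indispensable, and where the bookkeeping of the braid-type action on the monodromy data must be carried out with care; the remaining points (the immersion property and the closedness in $\mathfrak{M}_g$) are then comparatively routine once the equivalence between $(C,G)$ and the triple $(C',B,\mu)$ is set up.
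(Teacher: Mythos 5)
This statement is quoted by the paper from \cite{Cat12} (Theorem 2.4) without proof, so the comparison must be with Catanese's argument rather than anything in this paper. Your reduction of a pair $(C,G)$ to the quotient data $(C',B,\mu)$ via the Riemann existence theorem, and the use of a marked family over $\mathcal{T}_{h,k}$, is the right framework and matches the standard approach. But there is a genuine gap at exactly the point you flag as ``the main obstacle'': connectedness. You assert that the contractibility of $\mathcal{T}_{h,k}$ is what forces the finitely many admissible monodromy homomorphisms $\mu$ (and branch-point labelings) to lie in a single orbit of the mapping class group combined with $(\mathbb{Z}/d)^*$. Contractibility does no such thing: it only shows that, for one fixed $\mu$, the resulting family of covers is connected. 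Whether two different admissible surjections $\mu,\mu'\colon \pi_1(C'\setminus B)\to \mathbb{Z}/d$ with the same branching datum are related by a mapping class (composed with an automorphism of $\mathbb{Z}/d$) is a purely topological transitivity statement about the mapping-class-group action on monodromy data, and it is precisely the main theorem of \cite{Cat12} --- the word ``irreducibility'' in its title refers to exactly this. Catanese proves it by an explicit reduction of the monodromy data to a normal form under generators of the mapping class group (Dehn twists and braid-type moves on the branch points); for $d$ prime the analogous statement is classical (Cornalba), but for general $d$ it requires real work. Without this transitivity, your construction parameterizes only the covers arising from the orbit of your chosen $\mu$, and the honest parameter space --- the union over all orbits --- could a priori be disconnected, so both the connectedness of $\mathcal{T}_{g,d;[(k_1,\ldots,k_{d-1})]}$ and the irreducibility of $\mathfrak{M}_{g,d;[(k_1,\ldots,k_{d-1})]}$ remain unproved in your sketch.

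A secondary flaw: defining $\mathcal{T}_{g,d;[(k_1,\ldots,k_{d-1})]}$ as the quotient of $\mathcal{T}_{h,k}$ by the finite group permuting branch points within each type and by $(\mathbb{Z}/d)^*$ is problematic, because these actions have fixed points (at quotient curves with extra symmetries), so the quotient is in general an orbifold rather than the complex manifold the theorem asserts. The standard remedy --- in effect what Catanese does --- is to take $\mathcal{T}_{h,k}$ itself as the parameterizing manifold (equivalently, the fixed locus in $\mathcal{T}_g$ of the finite cyclic subgroup of the mapping class group, which is a closed connected complex submanifold biholomorphic to $\mathcal{T}_{h,k}$); once transitivity is known, this single copy already hits every isomorphism class of pairs with the given numerical type. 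Your argument for Zariski-closedness of the image (limits of automorphisms are automorphisms, local constancy of the discrete invariant, finiteness of fibers for the dimension count) is essentially sound.
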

We will give an analogous result for $G$-marked stable curves. 
\begin{defin}\label{equidef}
	Let $(C,G,\rho)$ be a $G$-marked stable curve: a $G$-{\em equivariant deformation} of $(C,G,\rho)$ is a triple $(p:\mathfrak{C}\rightarrow B, G, \eta)$ such that
	\begin{itemize}
		\item[(1)] $p:\mathfrak{C}\rightarrow B$ is a deformation of $C$ over an irreducible base $B$ with all fibres stable curves and the central fibre $\mathfrak{C}_O\simeq C$ ($O\in B$). 
		\item[(2)] $\eta:G\rightarrow Aut(\mathfrak{C})$ is an injective homomorphism inducing an effective action on $\mathfrak{C}$ such that $p$ is $G$-equivariant (where the action of $G$ on $B$ is trivial) and $\eta|_{\mathfrak{C}_O}\simeq \rho$. 
	\end{itemize}
\end{defin}
\begin{defin}
	We say that a $G$-marked stable curve $(C,G,\rho)$ is $G$-{\em equivariantly non-smoothable} (or has no $G$-{\em equivariant smoothing}) if $(C,G,\rho)$ can not be $G$-equivariantly deformed to $(C',G,\rho')$ such that $C'$ has less nodes than $C$.
\end{defin}
We have the following criterion which tells when a $G$-marked stable curves is $G$-{\em equivariantly non-smoothable}, and generalizes the prime case in \cite{Cat12}, Lemma 4.3:
\begin{proposition}\label{nonsm}
	
		Let $P\in C=\sum_iC_i$ be a node, set $G_P:=Stab(P)$ the stabilizer group of $P$ in $G$, then the following are equivalent:\\
		(1) All points in $G(P)$(:= the orbit of $P$) can by simultaneously $G$-equivariantly smoothed.\\
		(2) The induced group homomorphism $G_P\rightarrow GL(\mathcal{E}xt^1(\Omega_C,\mathcal{O}_C)_P)\simeq \mathbb{C}^*$ is trivial.
			
		\end{proposition}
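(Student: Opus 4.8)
The plan is to analyze the local-to-global structure of equivariant deformations at the node, reducing the question of simultaneously smoothing the orbit $G(P)$ to a purely local obstruction governed by how the stabilizer $G_P$ acts on the smoothing parameter. First I would recall the standard deformation-theoretic description: the space of first-order deformations of a nodal curve $C$ fits into a local-to-global exact sequence
\begin{equation*}
0\to H^1(\sH om(\Omega_C,\sO_C))\to \Ext^1(\Omega_C,\sO_C)\to H^0(\sE xt^1(\Omega_C,\sO_C))\to 0,
\end{equation*}
where the first term parameterizes locally trivial (equisingular) deformations and the sheaf $\sE xt^1(\Omega_C,\sO_C)$ is supported exactly on the nodes, with stalk at each node $P$ canonically isomorphic to $\C$ (it is the tensor product of the two cotangent directions of the branches). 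A deformation smooths the node $P$ precisely when its image in the stalk $\sE xt^1(\Omega_C,\sO_C)_P\simeq\C$ is nonzero. Since all this is functorial, the finite group $G$ acts on the whole sequence, and the stabilizer $G_P$ acts linearly on the one-dimensional stalk, giving the character $\chi_P:G_P\to \C^*$ of statement (2).

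The heart of the argument is then to show that one can $G$-equivariantly smooth the whole orbit $G(P)$ if and only if $\chi_P$ is trivial. For the direction $(2)\Rightarrow(1)$, I would take the global $G$-representation $\Ext^1(\Omega_C,\sO_C)$, decompose it into isotypic components, and observe that producing a $G$-equivariant smoothing of the orbit amounts to finding a $G$-invariant first-order deformation whose image in $\bigoplus_{Q\in G(P)}\sE xt^1(\Omega_C,\sO_C)_Q$ is nonzero in each summand. The group $G$ permutes the summands transitively, so a $G$-invariant element of this direct sum is determined by a single $G_P$-invariant element of the stalk at $P$; such a nonzero element exists exactly when $G_P$ acts trivially, i.e. when $\chi_P$ is trivial. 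Because the map $\Ext^1(\Omega_C,\sO_C)\to H^0(\sE xt^1)$ is surjective and $G$-equivariant, and we are over $\C$ where taking $G$-invariants is exact, this invariant section lifts to a $G$-invariant first-order deformation; one then invokes the unobstructedness of deformations of nodal curves (or the versality of the standard smoothing family together with averaging the action) to integrate this to an actual $G$-equivariant deformation smoothing all points of $G(P)$ simultaneously. For $(1)\Rightarrow(2)$, a genuine $G$-equivariant smoothing gives a $G$-invariant deformation whose component at $P$ is nonzero, forcing a nonzero $G_P$-fixed vector in $\sE xt^1(\Omega_C,\sO_C)_P$, hence $\chi_P$ trivial.

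I expect the main obstacle to be the passage from first-order to honest deformations in a genuinely $G$-equivariant way, rather than the linear-algebra bookkeeping. Concretely, having a $G$-invariant tangent vector that smooths the node does not by itself produce a $G$-equivariant family; one must either build the smoothing family by hand as a quotient of an explicit local model (the node $xy=t$ with $G_P$ acting by a character on $t$, glued $G$-equivariantly across the orbit and extended trivially elsewhere using a $G$-invariant equisingular deformation for the remaining nodes) or appeal to the $G$-equivariant versal deformation space and show that the locus smoothing $G(P)$ is nonempty precisely under the character condition. The explicit local model has the advantage that when $\chi_P$ is trivial one can literally write the $G_P$-equivariant smoothing $xy=t$ with $t$ fixed by $G_P$, average over the cosets $G/G_P$ to spread it over the orbit, and check stability of nearby fibres; this is the approach I would pursue, as it simultaneously proves existence in $(2)\Rightarrow(1)$ and makes the character obstruction in $(1)\Rightarrow(2)$ transparent. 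The cyclic (indeed abelian) hypothesis on $G$ ensures $G_P$ acts on the stalk through a genuine character and simplifies the isotypic decomposition, so no representation-theoretic subtlety beyond Maschke's theorem is needed.
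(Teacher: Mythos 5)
Your proposal is correct and follows essentially the same route as the paper: both use the local-to-global exact sequence for $\Ext^1(\Omega_C,\sO_C)$, the exactness of taking $G$-invariants over $\C$, and the reduction of a $G$-invariant smoothing direction on the orbit $G(P)$ to a nonzero $G_P$-fixed vector in the one-dimensional stalk at $P$. The only cosmetic difference is that you phrase the orbit analysis via induced representations while the paper writes out the explicit scalars $\lambda_i$ with $\prod_i\lambda_i=\chi_P(\gamma^r)$, and you are somewhat more careful than the paper about integrating the first-order invariant deformation to an actual $G$-equivariant family.
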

			\begin{proof}
			Recall the local to global spectral sequence:
			$$(*)\  0\rightarrow \bigoplus_iH^1(\Theta_{C_i}(-\sum_{j\neq i}(C_i\cap C_j)))\rightarrow Ext^1(\Omega^1_C,\mathcal{O}_C)\rightarrow \bigoplus_{P\in  \mathcal{N}}\mathcal{E}xt^1(\Omega^1_C,\mathcal{O}_C)_P\rightarrow 0,$$
			where $\Omega^1_C$ is the dualizing sheaf of $C$ and $\Theta_{C_i}$ denotes the tangent sheaf of $C_i$.\\
			Since $G$ is a cyclic group, $(*)$ remains exact after taking the subspaces of $G$-invariant vectors. Hence we have a surjection:
			$$Ext^1(\Omega_C,\mathcal{O}_C)^G\twoheadrightarrow (\bigoplus_{P\in \mathcal{N}}\mathcal{E}xt^1(\Omega^1_C,\mathcal{O}_C)_P)^G.$$
		Since $G_P$ is a subgroup of $G=\mathbb{Z}/d$, we have $G_P\simeq \mathbb{Z}/m$ for some $m|d$, define $r:=m/d$. 
	Denoting by $\bar{\gamma}$ the image of $\gamma$ in $G/G_P$, clearly $\bar{\gamma}$ is a generator of $G/G_P$. Up to a change of index, we can assume that $G(P)=\{P_1=P,...,P_r\}$, such that $\bar{\gamma}(P_i)=P_{i+1}$. An easy observation is that $G(P)$ can be simultaneously smoothed $\Leftrightarrow$ $\exists v=(v_1,...,v_r) \in (\oplus_{P_i\in G(P)}\mathcal{E}xt^1(\Omega_C,\mathcal{O}_C)_{P_i})^G$, such that $v_i\neq 0,1\leq i\leq r$.\\
		Assume $\gamma(v_1,...v_r)=(\lambda_rv_r,\lambda_1v_1,...\lambda_{r-1}v_{r-1})$ for some $\lambda_i\in \mathbb{C}^*$. It is easy to see that $\gamma^rv=v\Leftrightarrow \prod^r_{i=1}\lambda_i=1$. \\
		If there exists $0\neq v\in (\oplus_{P_i\in G(P)}\mathcal{E}xt^1(\Omega_C,\mathcal{O}_C)_{P_i})^G$, then we have $\gamma^rv=v$. Noting that $\gamma^r$ is a generator of $G_P$, the induced homomorphism $G_P\rightarrow \mathbb{C}^*$ is then given by $\gamma^r\mapsto \prod\lambda_i=1$, hence trivial. Conversely, if $G_P\rightarrow \mathbb{C}^*$ is trivial, let $v=(v_1,\lambda_1v_1,...,(\prod^{r-1}_{i=1}\lambda_i)v_1)$ with $v_1\neq 0$, it is clear that $\gamma v=v$, since $\prod\lambda_i=1$.
	\end{proof}

\begin{defin}\label{numtyp}
A $G$-marked stable curve $(C,G,\rho)$ has the following associated combinatorial datum $\mathcal{D}(C,G,\rho)$:
\begin{itemize}
	\item[(1)] A $G$-marked graph $(\mathcal{I},G,\tilde{\rho})$, i.e., the graph $\mathcal{I}$ with induced $G$-action $\tilde{\rho}$ from the action $\rho:G\rightarrow Aut(C)$. 
	\item[(2)] For any $i\in I$, recall that $H_i=G_i/G''_i$ and $d_i=Ord(H_i)$. The image of an element $\beta\in G_i$ in $H_i$ is denoted as $\bar{\beta}$. We get a $H_i$-marked curve $(C_i,H_i,\rho_i)$, denote by $\widetilde{C_i}$ the normalization of $C_i$ and set $g_i=genus(\widetilde{C_i})$, $h_i=$ genus of $\widetilde{C'_i}:=\widetilde{C_i}/H_i$.  The element $\overline{\gamma^{n_i}}$ generates $H_i$, hence it induces a branching sequence $(k_1(i),...,k_{d_i-1}(i))$ on $\widetilde{C_i}$. We denote by $\mathcal{R}'_i$ the set of ramification points on $\widetilde{C_i}$ which do not come from $\mathcal{N}_i$. 
\end{itemize} 
The automorphism group $Aut(G)=(\mathbb{Z}/d)^*$ acts naturally on the set of data $\{\mathcal{D}(C,G,\rho)\}$,  we call the resulting equivalence class, $[\mathcal{D}(C,G,\rho)]$ the {\em numerical type} of $(C,G,\rho)$.
\end{defin}
\begin{remark}
	\begin{itemize}
		\item[(1)] 	As in the smooth case, we can determine an ``admissible condition" for the above combinatorial data (for the case $G$ has a prime order, see \cite{Cat12}, Definition 4.8), which we will not use in later discussion.
		\item[(2)] For the $H_i$-marked curve $(\widetilde{C_i},H_i,\rho_i)$, it is important to consider the branching sequence $(k_1(i),...,k_{d_i-1}(i))$ (induced by $\overline{\gamma^{n_i}}$) instead of the equivalent $H_i$-class $[(k_1(i),...,k_{d_i-1}(i))]$. Later we will see the differences. 
		\item[(3)] For a non-smoothable $G$-marked curve, using Proposition \ref{nonsm}, we see that $\forall i\in I_0$, the component $C_i$ is smooth (i.e. $\mathcal{N}^{(2)}_i=\emptyset$).
	\end{itemize}
\end{remark}
Now we come to the main result of this section, which is a partial generalization of \cite{Cat12}, Theorem 4.10.\\
We denote by $Orb$ the set of $G$-orbits in $I$, for any $o\in Orb$, we define a subcurve of $C$ consisting of all components in the orbit $o$, $$C(o):=\bigcup_{i\in o}C_i.$$ 
We have an induced $G_o:=G/G''_i$-marked (nodal)-curve $(C(o),G_o,\rho_o)$ (note that $C(o)$ might be disconnected). The following lemma shows that we have a ``canonical form" for $(C(o),G_o,\rho_o)$.
\begin{lemma}\label{canact}
  The $G_o$-marked curve $(C(o),G_o,\rho_o)$ is $G_o$-equivariantly isomorphic to the canonical form $(\cup^{n_o}_{j=1}C^{(j)}_o, G_o,\tilde{\rho}_o)$, where $n_o=\#|o|(=n_i)$, $C^{(j)}_o$ are $n_o$ copies of an irreducible component $C_i$ in $C(o)$,  and $\cup^{n_o}_{j=1}C^{(j)}_o$ is a quotient of $\bigsqcup^{n_o}_{j=1}C^{(j)}_o$ by identifying a finite set of (unordered) pair of points $\mathcal{P}_o$ and $\tilde{\rho}_o$ is determined by the following morphisms:
	$$id:C^{(j)}_o\rightarrow C^{(j+1)}_o\ \forall 1\leq  j \leq n_o-1;\ \overline{\gamma^{n_o}}:C^{(n_o)}_o\rightarrow C^{(1)}_o.$$
\end{lemma}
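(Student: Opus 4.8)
The plan is to exhibit the $G_o$-equivariant isomorphism by ``unwinding'' the $G$-action along a single chosen component. Fix an orbit $o\in Orb$ and a representative index $i\in o$, so that the stabilizer $G_i$ acts on $C_i$ and $G=\bigsqcup_{j=0}^{n_o-1}\gamma^{jn_i}G_i$ as a coset decomposition (recall $n_o=n_i=[G:G_i]$ and that $\overline{\gamma^{n_i}}$ generates $H_i$). The irreducible components of $C(o)$ are precisely $\gamma^{j}(C_i)$ for $j$ running over coset representatives, and since $G$ permutes them simply transitively on the orbit, each component is abstractly isomorphic to $C_i$. First I would set $C_o^{(j)}:=\gamma^{(j-1)n_i}(C_i)$ for $1\le j\le n_o$ (or equivalently relabel so that the chosen powers of $\gamma$ carry $C_i$ onto the successive copies), and define the comparison map component-by-component by transporting $C_i$ to $C_o^{(j)}$ via the fixed group elements; this is the source of the maps $id:C_o^{(j)}\to C_o^{(j+1)}$ and $\overline{\gamma^{n_o}}:C_o^{(n_o)}\to C_o^{(1)}$ asserted in the statement.

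The key steps, in order, are as follows. (i) Use the coset decomposition of $G$ with respect to $G_i$ to identify the disjoint union $\bigsqcup_{j=1}^{n_o}C_o^{(j)}$ with the normalization-level picture, where each $C_o^{(j)}$ is a faithful copy of the irreducible $H_i$-marked piece and $G''_i$ acts trivially on each copy (so the action genuinely descends to $G_o=G/G''_i$). (ii) Check that under these identifications the generator $\gamma$ acts exactly by the prescribed shift: it sends the $j$-th copy identically to the $(j+1)$-th for $j<n_o$, and sends the last copy back to the first by the residual automorphism $\overline{\gamma^{n_o}}$, which is well-defined precisely because $\gamma^{n_o}=\gamma^{n_i}$ normalizes $G_i$ and induces the generator of $H_i$. (iii) Finally, descend from the disjoint union to $C(o)$ itself by recording how the nodes in $\mathcal{N}_i$ glue the copies together: a node $P\in\mathcal{N}_i$ determines, through its $G$-orbit, an identification of an (unordered) pair of points among the $C_o^{(j)}$, and collecting these over all node-orbits produces the gluing data $\mathcal{P}_o$. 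One verifies that the shift action is compatible with these identifications, so it passes to the quotient and yields $\tilde\rho_o$.

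The main obstacle I expect is step (iii): controlling the node identifications $\mathcal{P}_o$ in a $G_o$-equivariant way. Two subtleties arise. First, a node may connect $C_i$ to a component in a \emph{different} orbit, or it may be a loop/connect two copies within the same orbit (the distinction between $\mathcal{N}_i^{(1)}$ and $\mathcal{N}_i^{(2)}$ introduced in the Notation section); one must track precisely which pairs of copies get glued and ensure the gluing pattern is invariant under the cyclic shift, i.e. that the set $\mathcal{P}_o$ of unordered pairs is preserved by $\tilde\rho_o$. Second, at each node the two branches carry local coordinates, and the identification must respect the $G_P$-action on the one-dimensional space $\mathcal{E}xt^1(\Omega_C^1,\mathcal{O}_C)_P$; here I would invoke Proposition \ref{nonsm} together with the stabilizer analysis to ensure the gluing can be chosen compatibly with the group action without introducing an obstruction. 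The remaining verifications---that the component-wise isomorphisms assemble into a morphism of nodal curves and that it intertwines $\rho_o$ with $\tilde\rho_o$---are routine once the combinatorics of $\mathcal{P}_o$ and the simple transitivity of the $G/G_i$-action on $o$ are in place.
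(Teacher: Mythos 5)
Your overall strategy is the same as the paper's: transport the chosen component $C_i$ onto the other components of $C(o)$ by powers of $\gamma$, observe that this gives a surjective $G_o$-equivariant morphism from the disjoint union of $n_o$ copies of $C_i$ onto $C(o)$, and recover $C(o)$ by gluing along the preimages of the nodes. Two points need attention: one is a genuine error in what you wrote, the other an unnecessary detour.

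First, the coset decomposition $G=\bigsqcup_{j=0}^{n_o-1}\gamma^{jn_i}G_i$ and the definition $C_o^{(j)}:=\gamma^{(j-1)n_i}(C_i)$ fail as written: since $G$ is cyclic of order $d$ and $[G:G_i]=n_i$, the stabilizer is $G_i=\langle\gamma^{n_i}\rangle$, so $\gamma^{jn_i}\in G_i$ for every $j$, all these cosets coincide with $G_i$, and all your ``copies'' coincide with $C_i$; the resulting map to $C(o)$ would not be surjective. The correct representatives are $1,\gamma,\dots,\gamma^{n_i-1}$, i.e.\ $C_o^{(j)}=\gamma^{j-1}(C_i)$ (this is exactly the map $\overline{\gamma^{j-1}}|_{C_i}$ used in the paper), and then $\gamma^{n_o}=\gamma^{n_i}$ is the element of $G_i$ whose class $\overline{\gamma^{n_o}}$ generates $H_i$ and supplies the ``return'' map $C_o^{(n_o)}\to C_o^{(1)}$. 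Second, your step (iii) imports machinery the lemma does not need: the identification of the two branches at a node is purely set-theoretic, so there is no local-coordinate or $\mathcal{E}xt^1(\Omega^1_C,\mathcal{O}_C)_P$ condition to verify, and Proposition \ref{nonsm} plays no role here (it concerns smoothability, not the existence of the canonical form). The gluing data $\mathcal{P}_o$ is simply the collection of preimage pairs of those nodes of $C(o)$ whose two branches lie on distinct irreducible components of the orbit --- loops on a single $C_i$ are already present inside each copy, and nodes joining $C(o)$ to other orbits contribute nothing --- and the invariance of $\mathcal{P}_o$ under the shift action is automatic because the surjection from the disjoint union is $G_o$-equivariant by construction. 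With these two corrections your argument coincides with the paper's proof.
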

\begin{proof}
	It is clear that the morphisms given in the lemma define an action of $G_o$ on $\bigsqcup^{n_o}_{j=1}C^{(j)}_o$. It is easy to check that the morphisms $\overline{\gamma^{j-1}}|_{C_i}:C^{(j)}_o(=C_i)\rightarrow \overline{\gamma^{j-1}}(C_i)$ induce a surjective $G_o$-equivariant morphism $\bigsqcup^{n_o}_{j=1}C^{(j)}_o\rightarrow C(o)$. Denoting by $\mathcal{P}_o$ the set of inverse images of nodes in $C(o)$ which do not have two branches on the same irreducible curve, we obtain a quotient curve $\cup^{n_o}_{j=1}C^{(j)}_o$ by identifying the pairs of points lying in the same inverse image in $\mathcal{P}_o$, then we have a $G_o$-equivariant isomorphism $\phi_o:\cup^{n_o}_{j=1}C^{(j)}_o\rightarrow C(o)$.
\end{proof}

\begin{theorem}\label{thm1}
	Given a $G$-marked stable curve $(C,G,\rho)$, there exists a connected complex manifold $\mathcal{T}_{[\mathcal{D}(C,G,\rho)]}$ parameterizing all $G$-marked stable curves with numerical type $[\mathcal{D}(C,G,\rho)]$.\\
	 If moreover $(C,G,\rho)$ is $G$-equivariantly non-smoothable, denoting by $\mathfrak{M}_{[\mathcal{D}(C,G,\rho)]}$ the image set of the natural morphism $\mathcal{T}_{[\mathcal{D}(C,G,\rho)]}\rightarrow \overline{\mathfrak{M}_g}-\mathfrak{M}_g $, then each point inside $\mathfrak{M}_{[\mathcal{D}(C,G,\rho)]}$ has finite inverse image in $\mathcal{T}_{[\mathcal{D}(C,G,\rho)]}$, and the closure $\overline{\mathfrak{M}_{[\mathcal{D}(C,G,\rho)]}}$ consists of $G$-marked stable curves which can be $G$-equivariantly deformed into a curve with numerical type $[\mathcal{D}(C,G,\rho)]$.  
\end{theorem}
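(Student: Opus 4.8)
The plan is to build $\mathcal{T}_{[\mathcal{D}(C,G,\rho)]}$ orbit by orbit, reducing to the smooth case via Lemma \ref{canact} and Theorem \ref{smpara}. The $G$-marked graph $(\mathcal{I},G,\tilde\rho)$ is part of the numerical type, hence fixed for every curve we wish to parameterize. For each orbit $o\in Orb$ I would choose a representative vertex $i\in o$; by Lemma \ref{canact} the subcurve $C(o)$ is $G_o$-equivariantly reconstructed from the single $H_i$-marked smooth curve $(\widetilde{C_i},H_i,\rho_i)$ together with the (combinatorial) gluing datum $\mathcal{P}_o$. The normalization $\widetilde{C_i}$ carries the branching sequence $(k_1(i),\dots,k_{d_i-1}(i))$ and a finite $H_i$-invariant set of marked points, namely the preimages of the nodes in $\mathcal{N}_i$ together with the points of $\mathcal{R}'_i$. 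Descending to the quotient $\widetilde{C'_i}$ of genus $h_i$, these become $r_i$ marked points, so the pointed version of Catanese's construction (Theorem \ref{smpara}, enhanced to track a prescribed invariant finite set of marked points) produces a connected complex manifold, naturally identified with the Teichm\"uller space $\mathcal{T}_{h_i,r_i}$, parameterizing all $H_i$-marked smooth curves of this branching type with the given marked points. Call this $\mathcal{T}_o$.

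Next I would set $\mathcal{T}_{[\mathcal{D}(C,G,\rho)]}:=\prod_{o\in Orb}\mathcal{T}_o$. Over a point of this product the marked points on the copies $C^{(j)}_o$ of the normalizations determine, through the fixed graph $\mathcal{I}$ and the canonical gluing of Lemma \ref{canact}, a $G$-marked stable curve: one glues the pairs in $\mathcal{P}_o$ to form the loops and internal nodes and glues across orbits along the edges of $\mathcal{N}^{(1)}$. Since the ordinary double point is analytically rigid, this gluing contributes no further continuous parameters, and the resulting flat family over $\mathcal{T}_{[\mathcal{D}(C,G,\rho)]}$ has all fibres $G$-marked stable curves of numerical type $[\mathcal{D}(C,G,\rho)]$. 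Conversely, decomposing any $G$-marked stable curve of this type into orbits and normalizing recovers a point of each $\mathcal{T}_o$, so the parameterization is surjective. Connectedness is then immediate, being a finite product of the connected $\mathcal{T}_o$; this establishes the first assertion.

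For the second assertion, assume $(C,G,\rho)$ is $G$-equivariantly non-smoothable, and consider the forgetful morphism $\pi:\mathcal{T}_{[\mathcal{D}(C,G,\rho)]}\to\overline{\mathfrak{M}_g}-\mathfrak{M}_g$ sending a point to the isomorphism class of its underlying stable curve. Fix $[C'']\in\mathfrak{M}_{[\mathcal{D}(C,G,\rho)]}$. A point of $\pi^{-1}([C''])$ records an injective homomorphism $G\hookrightarrow\Aut(C'')$ realizing the numerical type together with Teichm\"uller markings on the normalizations of the components. Since $C''$ is stable, $\Aut(C'')$ is finite, so there are only finitely many such homomorphisms; and for each, the marking lies over a fixed pointed stable component whose automorphism group is finite, so by proper discontinuity of the mapping class group action on the factors $\mathcal{T}_{h_i,r_i}$ only finitely many markings occur. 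Hence $\pi^{-1}([C''])$ is finite.

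Finally I would identify the closure. As $\overline{\mathfrak{M}_g}$ is proper, every point of $\overline{\mathfrak{M}_{[\mathcal{D}(C,G,\rho)]}}$ is the limit of a one-parameter family of curves of type $[\mathcal{D}(C,G,\rho)]$; applying $G$-equivariant stable reduction exhibits it as the central fibre of a $G$-equivariant degeneration of such curves, i.e.\ a curve that can be $G$-equivariantly deformed into one of type $[\mathcal{D}(C,G,\rho)]$. Conversely, if $(C'',G,\rho'')$ admits a $G$-equivariant deformation to type $[\mathcal{D}(C,G,\rho)]$, its general fibre lies in $\mathfrak{M}_{[\mathcal{D}(C,G,\rho)]}$, so $[C'']\in\overline{\mathfrak{M}_{[\mathcal{D}(C,G,\rho)]}}$. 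I expect the main obstacle to be exactly this closure analysis: one must control which extra nodes appear in a limit and verify that the $G$-action extends compatibly across the degeneration, matching smoothable against non-smoothable nodes on the two sides by means of the equivariant local-to-global spectral sequence of Proposition \ref{nonsm}. The non-smoothability hypothesis on $(C,G,\rho)$ is precisely what guarantees that $\mathfrak{M}_{[\mathcal{D}(C,G,\rho)]}$ is open in its closure and that no curve of type $[\mathcal{D}(C,G,\rho)]$ itself further degenerates the graph, so that the closure is governed entirely by equivariant deformations that \emph{add} nodes.
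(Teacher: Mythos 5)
Your construction is essentially the paper's: a product of Teichm\"uller spaces $\mathcal{T}_{h_i,r_i}$, one per $G$-orbit of components (the paper organizes the same factors into the three sub-products over $I_0$, $I_1$ and $I_2$), with the families glued along sections prescribed by the numerical type and the $G$-action on each fibre defined via the canonical form of Lemma \ref{canact}; finiteness of the fibres of $\mathcal{T}_{[\mathcal{D}(C,G,\rho)]}\to\overline{\mathfrak{M}_g}-\mathfrak{M}_g$ and the description of the closure are argued the same way (indeed more explicitly than in the paper, which is terse on the closure). The only substantive ingredient of the paper's proof you omit is the dimension count, via the $G$-invariant part of the local-to-global sequence used in Proposition \ref{nonsm}, showing that in the non-smoothable case $\mathcal{T}_{[\mathcal{D}(C,G,\rho)]}$ has the same dimension as $Ext^1(\Omega^1_C,\mathcal{O}_C)^G$; this completeness statement is not required by the literal wording of the theorem, though it is used later in the paper.
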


\begin{proof}
     $\mathcal{T}_{[\mathcal{D}(C,G,\rho)]}$ is a product of three products of Teichm{\"u}ller spaces, corresponding to the partition $I=I_0\bigcup I_1\bigcup I_2$:
     $$\mathcal{T}_0:=\prod_{i\in I_0}\mathcal{T}_{h_i,r_i},$$
     where $r_i=\#|\mathcal{N}^{(1)}_i|+2\#|\mathcal{N}^{(2)}_i|$. Over each  $\mathcal{T}_{h_i,r_i}$ we have a family of curves of genus $h_i=g_i$ with 
     $r_i$ marked points.
     $$\mathcal{T}_1:=\prod_{i\in I_1}\mathcal{T}_{h_i,r_i}, $$
     where $r_i=\sum^{d_i-1}_{l=1}k_l(i)$ is the number of ramification points of the covering $\widetilde{C_i}\rightarrow \widetilde{C_i}/H_i$. Over each $\mathcal{T}_{h_i,r_i} $ we have a family of $H_i$-marked curves of genus $g_i$ with the branching sequence $(k_1(i),...,k_{d_i-1}(i))$ with respect to a fixed generator $\gamma_i:=\bar{\gamma}$ of $H_i$ (See \cite{Cat12}, Theorem 2.4). 
     $$\mathcal{T}_2:=\prod_{[i]\in \bar{I}_2}\mathcal{T}_{h_i,r_i},$$ 
     where $\bar{I_2}$ is the set of orbits in $I_2$, $r_i=\sum^{d_i-1}_{l=1}k_l(i)$. In each orbit $[i]$ we pick one $\mathcal{T}_{h_i,r_i} $, over which we construct a family of  $n_i$ disjoint copies of $H_i$-marked curves of genus $g_i$ with the branching datum $(k_1(i),...,k_{d_i-1}(i))$ with respect to a fixed generator $\gamma_i:=\overline{\gamma^{n_i}}$ of $H_i$.\\
     Define $$\mathcal{T}_{[\mathcal{D}(C,G,\rho)]}:=\mathcal{T}_1\times\mathcal{T}_2\times\mathcal{T}_3.$$
     Now we can glue the pull back of the families over each factor, by identifying the sections according to the numerical type $[\mathcal{D}(C,G,\rho)]$, to get a family $\mathcal{C}_{[\mathcal{D}(C,G,\rho)]}$ over $\mathcal{T}_{[\mathcal{D}(C,G,\rho)]}$.
    
     Each fibre of $\mathcal{C}_{[\mathcal{D}(C,G,\rho)]}$ is a stable curve, on which we will define an action of $G$, making it a $G$-marked stable curve with numerical type $[\mathcal{D}(C,G,\rho)]$. 
     
We pick a fibre $C=\sum_{i\in I}C_i$, first the numerical type $[\mathcal{D}(C,G,\rho)]$ gives a $G$-action on the set of curves and nodes, in order to define an action of $G$ on the curve, it suffices to define the action on each orbit of the curves:

     If $i\in I_0$, $\gamma$ acts trivially.
     
     If $i\in I_1$, we have a natural action of $H_i$ on $C_i$ which is induced by the branching datum $(k_1(i),...,k_{d_i-1}(i))$ with respect to $\gamma_i$, the chosen generator of $H_i$ (by abuse of notation, the corresponding automorphism is also denoted as $\gamma_i$). Then the action of $G$ is defined by the homomorphism $G\rightarrow H_i$ which sends $\gamma$ to $\gamma_i$.
         
     If $i\in I_2$, we have to define the action of $G$ on $C([i])$. First we have the action of $H_i$ on $C_i$ which is determined by the branching datum $(k_1(i),...,k_{d_i-1}(i))$ with respect to $\gamma_i$. The action of $G$, equivalently the automorphism corresponding to $\gamma$, is defined as follows:\\
      $\gamma:C_{\gamma^{l-1}(i)}\rightarrow C_{\gamma^l (i)},x\mapsto x$ if $1\leq l\leq n_{[i]}-1$,\\
      $\gamma:C_{\gamma^{n_{[i]}-1}(i)}\rightarrow C_i,x\mapsto \gamma_i x$.\\
       By Lemma \ref{canact}, this should be the expected action.
     
     If $(C,G,\rho)$ is $G$-equivariantly non-smoothable, our parameterizing space has the expected maximal dimension. By Proposition \ref{nonsm} we have that 
     $$(\bigoplus_{p\in \mathcal{N}}\mathcal{E}xt^1(\Omega^1_C,\mathcal{O}_C)_p)^G=0.$$ Taking the $G$-invariant subspaces of $(*)$, we get
     $$(\bigoplus_iH^1(\Theta_{C_i}(-\sum_{j\neq i}(C_i\cap C_j)))^G\simeq Ext^1(\Omega^1_C,\mathcal{O}_C)^G.$$
     It is easy to see that 
     $$(\bigoplus_iH^1(\Theta_{C_i}(-\sum_{j\neq i}(C_i\cap C_j)))^G=\bigoplus_{o\in Orb}(\bigoplus_{i\in o}H^1(\Theta_{C_i}(-\sum_{j\neq i}(C_i\cap C_j)))^G.$$ 
     
     For each $i\in I_0$, it is clear that $$H^1(\Theta_{C_i}(-\sum_{j\neq i}C_i\cap C_j))^G=H^1(\Theta_{C_i}(-\sum_{j\neq i}C_i\cap C_j)),$$ hence has dimension equal to $dim_{\mathbb{C}}\mathcal{T}_{g_i,r_i}$.
     
     Using a result of Pardini (cf. \cite{Par91}, Theorem 4.2), one can show easily that $\forall i\in I_1$, $$dim(H^1(\Theta_{C_i}(-\sum_{j\neq i}C_i\cap C_j))^G)=dimH^1(\Theta_{\widetilde{C'_i}}(-B_i))=dim\mathcal{T}_{h_i,r_i},$$ where $B_i$ is the branching locus of the covering $\widetilde{C_i}\rightarrow \widetilde{C'_i}$.
     
     For each $i\in I_2$, consider the map 
     $$H^1(\Theta_{C_i}(-\sum_{j\neq i}C_i\cap C_j))\rightarrow \bigoplus_{j\in [i]}H^1(\Theta_{C_j}(-\sum_{l\neq j}C_j\cap C_l)):$$ $$v\mapsto (v,\gamma(v),...,\gamma^{n_{[i]}-1}(v)).$$ It is easy to see that this induces an isomorphism between the subspaces 
     $$H^1(\Theta_{C_i}(-\sum_{j\neq i}C_i\cap C_j))^{H_i}\simeq (\bigoplus_{j\in [i]}H^1(\Theta_{C_j}(-\sum_{l\neq j}C_j\cap C_l)))^G.$$ Therefore we obtain that $$dim(\bigoplus_{j\in [i]}H^1(\Theta_{C_j}(-\sum_{l\neq j}C_j\cap C_l)))^G=dimH^1(\Theta_{C_i}(-\sum_{j\neq i}C_i\cap C_j))^{H_i}=dim\mathcal{T}_{h_i,r_i}.$$
     We see that the family $\mathcal{T}_{[\mathcal{D}(C,G,\rho)]}$ has the same dimension as the Kuranishi space $Ext^1(\Omega^1_C,\mathcal{O}_C)^G$.\\ 
      The finiteness of the morphism $\mathcal{T}_{[\mathcal{D}(C,G,\rho)]}\rightarrow \mathfrak{M}_{[\mathcal{D}(C,G,\rho)]} $ follows from the finiteness of the smooth case and the fact that the automorphism group of a stable curve is finite. The rest of the theorem follows from the definition of a $G$-equivariantly non-smoothable $G$-marked stable curve.
\end{proof}
\begin{defin}\label{stratum}
\begin{itemize}
    \item[(1)] We call the image of the natural map $\mathcal{T}_{[\mathcal{D}(C,G,\rho)]}\rightarrow \overline{\mathfrak{M}_g}-\mathfrak{M}_g$ a {\em stratum} with numerical type $[\mathcal{D}(C,G,\rho)]$, which we denote by $\mathfrak{M}_{[\mathcal{D}(C,G,\rho)]}$.
    \item[(2)] A stratum $\mathfrak{M}_{[\mathcal{D}(C,G,\rho)]}$ is called {\em maximal}, if it is not contained in the Zariski closure of another stratum $\overline{\mathfrak{M}_{[\mathcal{D}(C',G,\rho')]}}$, such that\\ $dim\mathfrak{M}_{[\mathcal{D}(C,G,\rho)]}<dim\mathfrak{M}_{[\mathcal{D}(C',G,\rho')]}$.
\end{itemize}
\end{defin}
It is clear that $(\overline{\mathfrak{M}_g}-\mathfrak{M}_g)(G)$ is a union of all the strata (with group $G$). By Theorem \ref{thm1} we see that the closure of any stratum is an irreducible Zariski closed subset of $(\overline{\mathfrak{M}_g}-\mathfrak{M}_g)(G)$. Therefore to understand the components of $(\overline{\mathfrak{M}_g}-\mathfrak{M}_g)(G)$, is equivalent to understanding the maximal strata.
\section{The maximal strata}
In the previous section we have interpreted the problem of determining irreducible components of $(\overline{\mathfrak{M}_g}-\mathfrak{M}_g)(G)$ into determining the maximal $G$-strata.

In this section we first discuss in general when a stratum is maximal. Then with certain additional conditions we give an explicit description via the associated combinatoric data.
\begin{defin}
 	 Given a stratum, we say that (the action of) $G$ is {\em maximal} if for any general curve $(C,G,\rho)$ inside the stratum, there is no subgroup $G'\subset Aut(C)$ isomorphic to $G$ such that the induced $G'$-marked stable curve $(C,G')$ is $G'$-equivariantly smoothable or the dimension of the stratum corresponding to $(C,G')$ is larger than the dimension of the given one. 
 \end{defin}
 It is clear that a stratum is maximal if and only if the corresponding action of $G$ is maximal.
\begin{defin}\label{StrataAut}
		The automorphism group of a stratum is defined to be the automorphism group of a general curve inside the stratum.
\end{defin} 
\begin{remark}
	Definition \ref{StrataAut} makes sense since for the stratum of a smooth $G$-marked curve, the general curves have isomorphic automorphism groups.
\end{remark}
It is easy to compute the automorphism group of a given stratum: pick a general curve $(C,G,\rho)$ in the stratum, write 
$$C=\sum_iC_i=\sum_{\lambda\in\Lambda}\sum^{s_\lambda}_{t=1}C_{\lambda,t},$$
where $\Lambda$ is the index set of isomorphism classes of the irreducible components with marked points $(C_i,\mathcal{N}^{(1)}_i,\mathcal{N}^{(2)}_i)$ and $s_\lambda$ is the number of curves $(C_i,\mathcal{N}^{(1)}_i,\mathcal{N}^{(2)}_i)$ belonging to the isomorphism class $\lambda$.\\

Clearly $Aut(C)$ is a subgroup of $\prod_{\lambda\in\Lambda}((\prod_{t=1}^{s_\lambda}Aut(C_{\lambda,t}))\rtimes \mathfrak{S}_{s_\lambda})$ consisting of elements preserve the nodes of $C$, where for each class $\lambda$ we fix an identification of $\Aut(C_{\lambda,t})$ for all curves $C_{\lambda,t}$ and  the semi-direct product is determined by the following group homomorphism:
$$\mathfrak{S}_{s_\lambda}\rightarrow Aut(\prod_{t=1}^{s_\lambda}Aut(C_{\lambda,t})), \sigma\mapsto \phi_\sigma:(g_1,...,g_{s_\lambda})\mapsto(g_{\sigma(1)},...,g_{\sigma(s_\lambda)}).$$
 Once we know the automorphism group of the stratum, we can find the subgroups which are isomorphic to $G$ and hence determine whether the stratum is maximal.  \\
Now for fixed genus $g$ and group $G$, we can determine the irreducible components (equivalently, the maximal strata) of $(\overline{\mathfrak{M}_g}-\mathfrak{M}_g)(G)$ since the possible configurations are finite. \\
However if we do not fix the genus $g$, due to the following phenomena, it is not so easy to obtain a brief description of the irreducible components even for cyclic groups.\\
Recall that in the smooth case a stratum with group $G$ is called {\em full} if the automorphism group of the stratum equals $G$. Now for a $G$-marked stable curve $(C=\sum_iC_i,G,\rho)$, if for some $i$, the group $H_i$ is not full for the induced action of $H_i$ on $\widetilde{C_i}$, the complicity of $Aut(C)$ increases.\\
We give an example for the automorphism group of a non-full stratum of smooth curves. By \cite{MSSV02},  Lemma 4.1 we know that for a general smooth curve $C$ inside a non-full stratum, $G$ is a normal subgroup of $\Aut(C)$ and $\Aut(C)/G$ is isomorphic to $\mathbb{Z}/2\mathbb{Z}$, $(\mathbb{Z}/2\mathbb{Z})^2$, etc. In the case of $G$ being cyclic and $Aut(C)/G\simeq (\mathbb{Z}/2\mathbb{Z})^2$, by \cite{MSSV02}, Lemma 4.1 there are three elements $b_1,b_2,b_3 \in Aut(C)-G$, such that $b_i$ has order 2 and the product $b_1b_2b_3$ is contained in $G$. The following proposition tells us in this case all the possibilities for $Aut(G)$: (\cite{Li16}, Lemma 5.7)
\begin{proposition}\label{nonfull}
		Let $G(H)$ be a group containing a normal cyclic subgroup $H$ of order $d$ such that $G(H)/H\simeq (\mathbb{Z}/2)^2$. Assume in addition that there exist three elements $b_1,b_2,b_3 \in G(H)-H$ such that $b_i$ has order 2 and the product $b_1b_2b_3$ is contained in $H$. Then $G(H)$ has the presentation:
		$$\{\alpha,\beta_1,\beta_2|\alpha^d=1,\beta_1^2=\beta_2^2=1,\beta_1\alpha=\alpha^{l_1}\beta_1,\beta_2\alpha=\alpha^{l_2}\beta_2,\beta_1 \beta_2=\beta_2\beta_1\alpha^{e_{1,2}}\}$$
		such that $0\leq l_1,l_2,e_{1,2}< d$, $\gcd(l_i,d)=1$, $l_i^2 \equiv 1 \mod d$, $d|(l_i+1)e_{1,2}$, for $i=1,2$ and $\gcd(d,l_1l_2+1)|e_{1,2}$. \\
		Moreover, $\gamma:=\bar{\alpha}$ is a generator of $H$; $b_i=\bar{\beta_i}$, $b_i\gamma b_i=\gamma^{l_i}$ for $i=1,2$ and $b_2b_1b_2=b_1\gamma^{e_{1,2}}$; $b_3=b_1b_2\gamma^f$, where $f$ is an integer such that $0\leq f<d$ and $d|((l_1l_2+1)f+e_{1,2})$.  
\end{proposition}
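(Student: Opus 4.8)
The plan is to read off the generators and relations from the group structure and then confirm that those relations present $G(H)$ without collapsing it further. Write $H=\langle\gamma\rangle$ with $\gamma$ a generator of order $d$, and let $b_1,b_2$ be two of the three given involutions, chosen so that their images generate $G(H)/H\cong(\mathbb{Z}/2)^2$; the third nontrivial coset is then $b_1b_2H$, and the hypothesis $b_1b_2b_3\in H$ together with $b_i=b_i^{-1}$ gives $b_3=b_1b_2\gamma^f$ for some $0\le f<d$. Because $H$ is normal and cyclic, conjugation by $b_i$ is an automorphism of $H$, so $b_i\gamma b_i^{-1}=\gamma^{l_i}$ with $\gcd(l_i,d)=1$; using $b_i=b_i^{-1}$ this reads $b_i\gamma b_i=\gamma^{l_i}$, and conjugating twice with $b_i^2=1$ forces $\gamma=\gamma^{l_i^2}$, i.e. $l_i^2\equiv 1\pmod d$. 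Since $b_1$ and $b_2$ commute modulo $H$, their commutator lies in $H$, and I set $b_1b_2=b_2b_1\gamma^{e_{1,2}}$ with $0\le e_{1,2}<d$. This already produces all the relations of the asserted presentation.

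Next I would extract the arithmetic constraints from the order-two conditions. Conjugating $b_2$ by $b_1$ and applying the commutation relation gives $b_1b_2b_1=b_2\gamma^{e_{1,2}l_1}$; the left-hand side is an involution, so squaring the right-hand side yields $\gamma^{e_{1,2}l_1(l_2+1)}=1$, and as $\gcd(l_1,d)=1$ this is exactly $d\mid (l_2+1)e_{1,2}$. The symmetric computation gives $b_2b_1b_2=b_1\gamma^{-e_{1,2}l_2}$ and $d\mid (l_1+1)e_{1,2}$; combining $d\mid (l_2+1)e_{1,2}$ with this rewrites $-e_{1,2}l_2\equiv e_{1,2}\pmod d$, so that $b_2b_1b_2=b_1\gamma^{e_{1,2}}$, the asserted ``moreover'' identity. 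Finally, imposing $b_3^2=1$ on $b_3=b_1b_2\gamma^f$ and simplifying with the relations above collapses $b_3^2$ to a single power of $\gamma$, whose exponent reduces modulo $d$ (using the two divisibilities) to $(l_1l_2+1)f+e_{1,2}$; hence $d\mid (l_1l_2+1)f+e_{1,2}$. Since such a $b_3$ exists by hypothesis, this congruence in $f$ is solvable, and solvability is equivalent to $\gcd(d,l_1l_2+1)\mid e_{1,2}$, the last condition.

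It then remains to see that these relations present $G(H)$ rather than merely holding in it. The assignment $\alpha\mapsto\gamma$, $\beta_i\mapsto b_i$ extends to a homomorphism from the abstractly presented group $\Gamma$ onto $G(H)$, surjective because $\gamma,b_1,b_2$ generate $G(H)$ (they contain $H$ and surject onto the quotient, hence meet every coset, including that of $b_3$). In $\Gamma$, the relations $\beta_i\alpha=\alpha^{l_i}\beta_i$ (with $l_i$ invertible mod $d$) and $\beta_1\beta_2=\beta_2\beta_1\alpha^{e_{1,2}}$ let one push every $\alpha$ to the left and reorder the $\beta_i$, so that every element of $\Gamma$ has the normal form $\alpha^k\beta_1^{\epsilon_1}\beta_2^{\epsilon_2}$ with $0\le k<d$ and $\epsilon_i\in\{0,1\}$; thus $|\Gamma|\le 4d=|G(H)|$, and the surjection is forced to be an isomorphism.

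The main obstacle I anticipate is not any single step but the consistent bookkeeping of exponents when commuting the $b_i$ past powers of $\gamma$: one must check that the two divisibilities $d\mid(l_i+1)e_{1,2}$ are precisely what make the ``moreover'' identities hold and what keeps the normal-form count sharp at $4d$, so that no spurious relation is introduced and the presentation neither over- nor under-determines $G(H)$.
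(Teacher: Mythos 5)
The paper gives no proof of this proposition at all---it simply cites \cite{Li16}, Lemma 5.7---so there is no internal argument to compare against; judged on its own, your proof is correct and complete. The way you read off the relations from normality of $H$ and the involution conditions, extract the congruences $d\mid(l_i+1)e_{1,2}$ and $d\mid\bigl((l_1l_2+1)f+e_{1,2}\bigr)$ (whence $\gcd(d,l_1l_2+1)\mid e_{1,2}$ by solvability in $f$), and then close the loop with the normal-form count $|\Gamma|\le 4d=|G(H)|$ to show the relations genuinely present the group, is sound in every step and is surely the intended argument behind the cited lemma.
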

 In the smooth case, in order to determine if a stratum is maximal, we only need to compute the subgroups of $G(H)$ which are isomorphic to $H$. However, for stable curves, we have to compute all the cyclic subgroups of $\Aut(\widetilde{C_i})$ and solve a combinatoric problem concerning the dual graph and all the cyclic subgroups of $Aut(\widetilde{C_i})$ for each $C_i$.
 
In order to have a more detailed discussion, for the rest of the article we make the following assumptions:
\begin{assumption}\label{assum}
\begin{itemize}
\item[(0)] $(C=\sum_{i \in I}C_i,G,\rho)$ is $G$-equivariantly non-smoothable.\footnote{Of course, this is a necessary condition for having a maximal stratum.}
\item[(1)] For a general stable curve $(C,G,\rho)$ in the stratum we have $H_i=Aut(\widetilde{C_i})$ and $g(\widetilde{C_i})\geq 2$ for all $i$.
\item[(2)] For any $i\in I$, the parameterizing space $\mathcal{T}_{h_i,r_i}$ has dimension$>0$.
\end{itemize}
\end{assumption}

\begin{remark}
	\begin{itemize}
		\item[(1)] 
By assumption (2), for a general curve $(C,G,\rho)$ in the stratum, two irreducible components coming from different $G$-orbits must be non-isomorphic, hence we have $Orb=\Lambda$. Therefore any $\beta\in \Aut(C)$ fixes the $G$-orbits and induces $\beta_o:=\beta|_{C(o)} \in \Aut(C(o))$, conversely $(\beta_o)_{o\in Orb}$ determines $\beta$.  
\item[(2)] Given $\beta=(\beta_o)\in Aut(C)$, the order of $\beta$ is $l.c.m\{Ord(\beta_o)\}$. Using the isomorphism in Lemma \ref{canact} and regarding $\beta_o$ as an element in $(\prod_{j=1}^{n_o}Aut(C^{(j)}_o))\rtimes \mathfrak{S}_{n_o}$, we can write $\beta_o=((\beta_{o,1},...,\beta_{o,n_o}),\overline{\beta_o})$. What is the order of $\beta_o$? Assume that $\overline{\beta_o}$ has $\mu_o(\beta)$ orbits in $o$ with lengths $l_1,...,l_{\mu_o(\beta)}$, then we have $$\overline{\beta_o}=(i_{1},...,i_{l_1})(i_{l_1+1},...,i_{l_1+l_2})...(i_{n_o-l_{\mu_o(\beta)}+1},...,i_{n_o})$$
 and $$Ord(\beta_o)=l.c.m(Ord(\beta_{o,i_1}...\beta_{o,i_{l_1}})l_1,...,Ord(\beta_{o,(i_{n_o-l_{\mu_o(\beta)}}+1)}...\beta_{o,i_{n_o}})l_{\mu_o(\beta)}).$$
\end{itemize}
\end{remark}
We want to understand when the stratum is maximal. For this purpose we study first the quotient $C/\langle\beta\rangle$, where $\beta\in \Aut(C)-G$ is an element of order $d$. \\

\begin{lemma}\label{fac}
For any $\beta\in Aut(C)$, the quotient map $\pi:C\rightarrow C':=C/G$ factors through the quotient map $C\rightarrow C/\langle\beta\rangle$.
\end{lemma}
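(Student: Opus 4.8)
The plan is to recognize that the asserted factorization is purely a statement about orbits. The quotient map $q\colon C\to C/\langle\beta\rangle$ is a geometric quotient by the finite cyclic group $\langle\beta\rangle$, so by its universal property $\pi\colon C\to C'=C/G$ factors as $\pi=\bar\pi\circ q$ if and only if $\pi$ is constant on $\langle\beta\rangle$-orbits, i.e. $\pi\circ\beta^{k}=\pi$ for all $k$. Since $\langle\beta\rangle$ is generated by $\beta$, this reduces to proving the single identity $\pi\circ\beta=\pi$, equivalently that $\beta(x)$ and $x$ lie in the same $G$-orbit for every $x\in C$. So the whole lemma amounts to showing that $\beta$ moves points only within $G$-orbits.

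First I would reduce to a single irreducible component. By the Remark following Assumption \ref{assum} (which uses assumption (2) to force $Orb=\Lambda$), every $\beta\in\Aut(C)$ preserves the $G$-orbits of components; in particular, for a fixed component $C_i$ the image $\beta(C_i)$ lies in the same $G$-orbit as $C_i$, so there is a $g\in G$ with $g(C_i)=\beta(C_i)$. Then $g^{-1}\beta$ stabilizes $C_i$ set-theoretically and restricts to an automorphism of the marked curve $C_i$.

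Next I would pass to the normalization. The automorphism $g^{-1}\beta|_{C_i}$ lifts uniquely to an automorphism of $\widetilde{C_i}$, and by assumption (1) we have $\Aut(\widetilde{C_i})=H_i$. Since $H_i=G_i/G''_i$ is exactly the group through which the stabilizer $G_i\subseteq G$ acts on $\widetilde{C_i}$, there is an element $h\in G_i$ whose action on $\widetilde{C_i}$ coincides with the lift of $g^{-1}\beta|_{C_i}$. As the normalization map is surjective and the lift is unique, two automorphisms of $C_i$ with the same lift coincide; hence $g^{-1}\beta|_{C_i}=h|_{C_i}$, i.e. $\beta|_{C_i}=(gh)|_{C_i}$ with $gh\in G$. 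Consequently $\beta(x)=(gh)(x)\in G\cdot x$ for every $x\in C_i$, and letting $C_i$ range over all components gives $\pi\circ\beta=\pi$, which is what we wanted.

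The only genuinely delicate point -- and the step I would write out most carefully -- is the passage through the normalization in the last paragraph: one must check that the functorial lift $\Aut(C_i)\hookrightarrow\Aut(\widetilde{C_i})$ is injective and compatible with the identification $\Aut(\widetilde{C_i})=H_i$ coming from $G$, so that an element of $H_i$ realized by $g^{-1}\beta$ really is realized by an honest element $h\in G_i$ acting on all of $C_i$ (respecting the node identifications). Everything else is formal: the reduction to $\pi\circ\beta=\pi$ via the universal property, and the orbit-preservation of $\beta$, are immediate from the quotient construction and the Remark after Assumption \ref{assum}.
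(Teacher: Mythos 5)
Your proof is correct and follows essentially the same route as the paper: reduce to $\pi\circ\beta=\pi$, use assumption (2) to see that $\beta$ preserves the $G$-orbits of components, and use assumption (1) ($H_i=\Aut(\widetilde{C_i})$) to conclude that on each component $\beta$ acts through an element of $G$. The only cosmetic difference is that the paper organizes the last step via the canonical form of Lemma \ref{canact} and an explicit computation of the fibres of $\pi_o$, whereas you phrase it as the statement that $\beta|_{C_i}$ coincides with the restriction of an honest element $gh\in G$ via the (injective) lift $\Aut(C_i)\hookrightarrow\Aut(\widetilde{C_i})$ --- the same content, cleanly repackaged.
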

\begin{proof}
Note that we have the following decomposition of $C'$ into irreducible components: $C'=\sum_{o\in Orb} C'_o$.\footnote{For any $i\in o$, $\widetilde{C'_i}=$ normalization of $C'_o$}

For the lemma, it suffices to show that for any $P\in C$, $\pi(P)=\pi(\beta(P))$. By assumption $(2)$ we have that $\beta(C(o))=C(o)$, therefore it suffices to consider the map $\pi|_{C(o)}:C(o)\rightarrow C'_o$ and $\beta_o:=\beta|_{C(o)}$. Using Lemma \ref{canact} we see this is equivalent to considering the map $\pi_o:\bigsqcup^{n_o}_{j=1}C^{(j)}_o\rightarrow C'_o$, where $\pi_o$ is the composition of $\pi|_{C(o)}$ with the natural map $\bigsqcup^{n_o}_{j=1}C^{(j)}_o\rightarrow C(o)$.

 We determine first the fibre of $\pi_o$: noting that $\overline{\gamma}$ acts transitively on the vertices inside $o$, hence any fibre of $\pi_o$ must contain at least a point in $C^{(1)}_o$, say $x^{(1)}\in C^{(1)}_o$. Here we only discuss in detail the case where $x^{(1)}$ does not lie in the inverse image of a node of $C(o)$, the other case is similar. Then using the isomorphism of Lemma \ref{canact} we see that $\pi^{-1}_o(\pi_o(x^{(1)}))=\{x^{(1)},\gamma^{n_o}(x^{(1)}),...,\gamma^{n''_o-n_o}(x^{(1)});$
$x^{(2)},...,\gamma^{n''_o-n_o}(x^{(2)});...;x^{(n_o)},...,\gamma^{n''_o-n_o}(x^{(n_o)})\}$, where $n''_o:=d/|G''_i|$ for any $i\in o$ and $x^{(j)}$ denotes the point on $C^{(j)}_o$ which equals to $x^{(1)}$ via the identification $C^{(j)}_o=C^{(1)}_o$. Now since $\beta_o=((\beta_{o,1},...,\beta_{o,n_o}),\overline{\beta_o})$ and $\forall x^{(j)}\in C^{(j)}_o$, $\beta_o(x^{(j)})=\beta_{o,\overline{\beta_o}(j)}x^{(\overline{\beta_o}(j))}$, by assumption $(1)$ we have that $\beta_{o,\overline{\beta_o}(j)}\in \langle \gamma^{n_o}\rangle$ and hence $\beta_o(x^{(j)})\in \pi^{-1}_o(\pi_o(x^{(1)}))$. 
\end{proof}
\begin{remark}\label{Two cases}
For simplicity we denote by $\mathfrak{M}_{C'}$ the stratum corresponding to $C\rightarrow C'$ and by $\mathfrak{M}_{\beta}$ the stratum corresponding to $C\rightarrow C/\langle\beta \rangle$ (similarly for $\mathcal{T}_{C'}$ and $\mathcal{T}_\beta$), Lemma \ref{fac} says that $\mathfrak{M}_{C'}$ is contained in $\mathfrak{M}_{\beta}$ (not just in $\overline{\mathfrak{M}_{\beta}}$!). If $\mathfrak{M}_{C'}$ is not maximal, there are two cases:\\
(1) there exists a $\beta\in Aut(C)$ of order $d$, such that $dim \mathfrak{M}_{\beta}>dim \mathfrak{M}_{C'}$;\\
(2) there exists a $\beta\in Aut(C)$ of order $d$, such that $dim \mathfrak{M}_{\beta}=dim\mathfrak{M}_{C'}$ and $(C,\beta)$ is $G$-equivariantly smoothable.
\end{remark}
Recall that $\mathcal{T}_{C'}$ is the product of the parameterizing spaces of all the coverings $C(o)\rightarrow C'_o$, which is isomorphic to the parameterizing space $\mathcal{T}_{C'_o}$ of the covering $C^{(1)}_o\rightarrow C'_o$ (strictly speaking, of the covering $\widetilde{C^{(1)}_o}\rightarrow \widetilde{C'_o}$). Denoting by $(C/\langle\beta\rangle)(o)$ the inverse image of $C'_o$ in $C/\langle\beta\rangle$, the number of irreducible components of $(C/\langle\beta\rangle)(o)$ is $\mu_o(\beta)$. Hence the parameterizing space of $C(o)\rightarrow (C/{\beta})(o)$ is a product of $\mu_o(\beta)$ parameterizing spaces of the irreducible components of $(C/{\beta})(o)$, all of which have dimension greater or equal to $dim\mathcal{T}_{C'_o}$. Now we can characterize case $(1)$ of Remark \ref{Two cases}:
\begin{lemma} \label{case 1}
Case (1) of Remark \ref{Two cases} happens if and only if $\exists o\in Orb$, such that one of the following cases occurs:
\begin{itemize}
\item[(a)] $\mu_o(\beta)>1$;
\item[(b)] $\mu_o(\beta)=1$ and $3Ord(\beta_o^{n_o})\leq Ord(H_i)$ for any $i\in o$; 
\item[(c)] $\mu_o(\beta)=1$, $2Ord(\beta_o^{n_o})= Ord(H_i)$ for any $i\in o$ and we are not in the exceptional cases of Proposition \ref{exception}.
\end{itemize}

\end{lemma}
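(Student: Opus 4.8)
The plan is to translate the statement into a comparison of the dimensions of the parameterizing spaces $\mathcal{T}_\beta$ and $\mathcal{T}_{C'}$, and to carry it out one $G$-orbit at a time. As recalled before the lemma, $\mathcal{T}_{C'}=\prod_{o\in Orb}\mathcal{T}_{C'_o}$, while $\mathcal{T}_\beta=\prod_{o\in Orb}\mathcal{T}_{(C/\langle\beta\rangle)(o)}$ and the factor over $o$ is itself a product over the $\mu_o(\beta)$ irreducible components of $(C/\langle\beta\rangle)(o)$. By Lemma \ref{fac} the map $C\to C'$ factors through $C\to C/\langle\beta\rangle$, so each such component $D$ sits in a tower $\widetilde{C_i}\to D\to\widetilde{C'_o}$ with $D=\widetilde{C_i}/K_D$ for a cyclic subgroup $K_D\subseteq H_i$; taking $\langle\beta\rangle$-invariants in the sequence $(*)$ exactly as in the proof of Theorem \ref{thm1} identifies its contribution with $\dim H^1(\Theta_{C_i}(-\sum_{j\neq i}C_i\cap C_j))^{K_D}$. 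Since $K_D\subseteq H_i$, the $K_D$-invariants contain the $H_i$-invariants, so every component contributes at least $\dim\mathcal{T}_{C'_o}$.

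Consequently $\dim\mathcal{T}_\beta-\dim\mathcal{T}_{C'}=\sum_{o\in Orb}\Delta_o$ with each orbit contribution $\Delta_o\geq0$. Because the natural maps to moduli are finite (Theorem \ref{thm1}) and preserve dimension (Theorem \ref{smpara}), we have $\dim\mathfrak{M}_\beta=\dim\mathcal{T}_\beta$ and $\dim\mathfrak{M}_{C'}=\dim\mathcal{T}_{C'}$; hence Case (1) of Remark \ref{Two cases} holds if and only if $\Delta_o>0$ for some $o$. The whole lemma thus reduces to deciding, orbit by orbit, exactly when $\Delta_o>0$, and the three alternatives (a)--(c) should correspond to the structural possibilities for $\beta_o$.

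If $\mu_o(\beta)>1$, then $(C/\langle\beta\rangle)(o)$ has at least two components, each contributing at least $\dim\mathcal{T}_{C'_o}$, whereas $C'_o$ contributes a single copy; by Assumption \ref{assum}(2) this single copy is positive, so $\Delta_o\geq(\mu_o(\beta)-1)\dim\mathcal{T}_{C'_o}>0$. This is case (a). If $\mu_o(\beta)=1$, then $(C/\langle\beta\rangle)(o)$ is the single curve $D=\widetilde{C_i}/\langle\beta_o^{n_o}\rangle$, so $K_D=\langle\beta_o^{n_o}\rangle$ has order $e:=Ord(\beta_o^{n_o})$ and $\Delta_o=\dim W^{\langle\beta_o^{n_o}\rangle}-\dim W^{H_i}$, where $W:=H^1(\Theta_{C_i}(-\sum_{j\neq i}C_i\cap C_j))$. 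Decomposing $W$ into eigenspaces $W_\chi$ for the characters $\chi$ of $H_i=\mathbb{Z}/d_i$, the $H_i$-invariants are $W_0$ and the $\langle\beta_o^{n_o}\rangle$-invariants are the sum of the $W_\chi$ with $\chi$ trivial on the order-$e$ subgroup, i.e. the characters that are multiples of $e$. Writing $m:=d_i/e$, these nontrivial characters are $e,2e,\dots,(m-1)e$, so $\Delta_o=\sum_{j=1}^{m-1}\dim W_{je}$; equivalently $\Delta_o$ is the dimension of the non-invariant part of $W^{\langle\beta_o^{n_o}\rangle}$ under the residual action of $H_i/\langle\beta_o^{n_o}\rangle\cong\mathbb{Z}/m$ on $D$.

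It then remains to determine the sign of $\sum_{j=1}^{m-1}\dim W_{je}$, which I would do by a Chevalley--Weil / Riemann--Hurwitz computation on the tower $\widetilde{C_i}\to D\to\widetilde{C'_o}$, writing $\Delta_o=3(g(D)-h_i)+\#B_e-\#B_{d_i}$ for the branch loci $B_e,B_{d_i}$ of $\widetilde{C_i}\to D$ and $\widetilde{C_i}\to\widetilde{C'_o}$, and using $g(\widetilde{C_i})\geq2$ and $Aut(\widetilde{C_i})=H_i$ from Assumption \ref{assum}(1). When $m=1$ the two invariant subspaces coincide and $\Delta_o=0$, so such a $\beta_o$ produces no new dimension. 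When $m=d_i/e\geq3$, i.e. $3\,Ord(\beta_o^{n_o})\leq Ord(H_i)$, the genus and ramification terms, together with the positivity of the base moduli from Assumption \ref{assum}(2), force at least one of the eigenspaces $W_{je}$ $(1\leq j\leq m-1)$ to be nonzero, so $\Delta_o>0$ with no further exceptions; this is case (b). The borderline $m=2$, i.e. $2\,Ord(\beta_o^{n_o})=Ord(H_i)$, leaves only the single eigenspace $W_e$, whose dimension can drop to zero for special curves; I expect this to be the main obstacle, since positivity is no longer automatic and one must pin down precisely which curves make the one surviving quadratic-differential eigenspace vanish --- exactly the degenerate configurations collected in Proposition \ref{exception}. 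Thus $\Delta_o>0$ for $m=2$ precisely outside those cases, which is case (c); as $m=1,2,\geq3$ exhaust the possibilities, the stated equivalence follows.
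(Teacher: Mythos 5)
Your reduction is the same as the paper's: compare $\dim\mathcal{T}_\beta$ with $\dim\mathcal{T}_{C'}$ orbit by orbit, note that each of the $\mu_o(\beta)$ components of $(C/\langle\beta\rangle)(o)$ contributes at least $\dim\mathcal{T}_{C'_o}$, and dispose of case (a) using Assumption \ref{assum}(2) (so that $\dim\mathcal{T}_{C'_o}\geq 1$ and two or more components force a strict jump). Your eigenspace bookkeeping for $\mu_o(\beta)=1$ (writing $\Delta_o=\sum_{j=1}^{m-1}\dim W_{je}$ with $m$ the index of $\langle\beta_o^{n_o}\rangle$ in $H_i$, and splitting into $m=1$, $m=2$, $m\geq 3$) is a correct reformulation of the trichotomy in the statement.

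The gap is in how you close cases (b) and (c). For $m\geq 3$ you assert that ``the genus and ramification terms\dots force at least one of the eigenspaces $W_{je}$ to be nonzero,'' and you describe a Chevalley--Weil computation you \emph{would} do; for $m=2$ you say you \emph{expect} the degenerate configurations to be exactly those of Proposition \ref{exception}. Neither claim is actually proved, and these are precisely the substantive steps: the positivity of $\Delta_o$ is not a formal consequence of genus and ramification counts alone (it ultimately rests on the classification in \cite{MSSV02}, Lemma 4.1, of when a curve with a cyclic action has extra automorphisms generically). The paper closes both cases in one stroke by citing Proposition \ref{exception}, which is already stated before the lemma and is referenced in the lemma itself: its conclusion is that $\dim\mathcal{T}$ for the subcover strictly exceeds $\dim\mathcal{T}$ for the full cover except in two listed exceptional families, and both of those families have $d=2d'$, i.e.\ index exactly $2$. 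Hence index $\geq 3$ (your $m\geq 3$) is automatically non-exceptional and gives $\Delta_o>0$, which is (b), while index $2$ gives $\Delta_o>0$ exactly outside the exceptional cases, which is (c). If you replace your sketched Chevalley--Weil argument and your ``I expect'' by this one observation about Proposition \ref{exception}, your proof becomes complete and coincides with the paper's.
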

\begin{proof}
First note that $Ord(H_i)$ does not depend on the choice of $i\in o$.\\
By assumption (1) $dim\mathcal{T}_{C'_o}\geq 1$, hence (a) follows from the preceding discussion. If $\mu_o(\beta)=1$ for all $o$, we have that $C/\langle {\beta}\rangle(o)$ is an irreducible curve, in case $(b)$ by Proposition \ref{exception} we always have $dim\mathcal{T}_{C/\langle {\beta}\rangle(o)}>dim\mathcal{T}_{C'_o}$; if $2Ord(\beta_o^{n_o})= Ord(H_i)$ then $dim\mathcal{T}_{(C/\langle\beta\rangle)(o)}>dim\mathcal{T}_{C'_o}$ except for the cases in Proposition \ref{exception}.
\end{proof}
Now we discuss briefly the exceptional cases mentioned in \ref{case 1}, since the technique we use here is independent of the other part of this paper, we only sketch the proof and for reader who are interested in details we refer to \cite{MSSV02}, \cite{CLP11} and \cite{LW16}.
\begin{proposition}\label{exception}
	Given an admissible branching sequence $[(k_1,...,k_{d-1})]$ for $g\geq 2$ such that, $dim\mathcal{T}_{g;d,[(k_1,...,k_{d-1})]}\geq 1 $ and for any general curve $C\in \mathcal{T}_{g;d,[(k_1,...,k_{d-1})]}$, $G=Aut(C)$. For any proper subgroup $G'$ of $G$, we have an induced cyclic cover of degree $d':=order(G')$: $C\rightarrow C/G'$ and hence an admissible sequence $[(k'_1,k'_2,...,k'_{d'-1})]$ for $d'$ and $g$. Then $dim_\mathbb{C}\mathcal{T}_{g;d,[(k_1,...,k_{d-1})]}>dim_\mathbb{C} \mathcal{T}_{g;d',[(k'_1,...,k'_{d'-1})]}$ except for two cases:\\
	$(1)$ $d=2d'$, $2\mid d'$, $C/G\simeq \mathbb{P}^1$, $[(k_1,...,k_{d-1})]=[(1,0,...,0,k_{d'}=2,0,...,0,1)]$ and $[(k'_1,...,k'_{d'-1})]=[(1,0,...,0,k_{d'/2}=2,0,...,0,1)]$;\\
	$(2)$ $d=2d'$, $2\nmid d'$, $C/G\simeq \mathbb{P}^1$, $[(k_1,...,k_{d-1})]=[(0,1,0,...,0,k_{d'}=2,0,...,0,1,0)]$ and $[(k'_1,...,k'_{d'-1})]=[(1,0,..,0,1)]$.
\end{proposition}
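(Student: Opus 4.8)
The plan is to compare the two dimensions by splitting the first-order deformation space of $C$ under the generator $\gamma$, to reduce the comparison to a single prime-index step, and then to isolate the equality locus by a Riemann--Hurwitz computation on the intermediate quotient.

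First I would rewrite both numbers intrinsically. As in the proof of Theorem \ref{thm1} (and by Theorem \ref{smpara}) one has $\dim \mathcal{T}_{g;d,[(k_1,\dots,k_{d-1})]} = \dim H^1(C,\Theta_C)^{G}$ and $\dim \mathcal{T}_{g;d',[(k'_1,\dots,k'_{d'-1})]} = \dim H^1(C,\Theta_C)^{G'}$. Writing $V := H^1(C,\Theta_C) = \bigoplus_{m=0}^{d-1} V_m$ for the decomposition into $\gamma$-eigenspaces ($\gamma$ acting on $V_m$ by $\zeta_d^{m}$) and $G' = \langle \gamma^{d/d'}\rangle$, one gets $H^1(\Theta_C)^{G} = V_0$ and $H^1(\Theta_C)^{G'} = \bigoplus_{d'\mid m} V_m$. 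Thus the two dimensions coincide exactly when $V_{m} = 0$ for every $m$ with $d'\mid m$ and $0 < m < d$, and otherwise differ by $\sum_{s=1}^{d/d'-1}\dim V_{sd'}$. The whole content of the proposition is therefore the classification of this equality locus.

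Next I would reduce to prime index. For a chain $G'\subset G_1\subset G$ the corresponding invariant subspaces are nested, so if equality holds for $G'$ it holds for any intermediate $G_1$; choosing $G_1$ with $[G:G_1]$ prime, it suffices to treat the case $[G:G'] = p$ prime and afterwards check that the rigid configurations produced leave no room for a further proper step. For prime $p$ I would compute $\dim\mathcal{T}_{g;d'} - \dim\mathcal{T}_{g;d}$ explicitly: applying Riemann--Hurwitz to the degree-$p$ cover $C/G'\to C/G$ and tracking, branch point by branch point, how the local monodromy $\gamma^{a_t}$ of $C\to C/G$ splits (it ramifies in the intermediate cover iff $p\nmid a_t$, and contributes to the branch locus of $C\to C/G'$ according to whether the $G$-stabilizer survives in $G'$), together with the genus formula of Definition \ref{adm}, yields a closed expression for the difference in terms of $p$, the quotient genus $h$ of $C/G$, and the numbers of branch points of each local type.

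Finally I would impose that this difference vanish. Keeping the standing hypotheses $\dim\mathcal{T}_{g;d}\ge 1$ and $g\ge 2$ together with the admissibility/connectedness condition of Definition \ref{adm} (the $\gamma^{a_t}$ must generate $G$ and satisfy $\sum a_t\equiv 0$), I expect the vanishing to force $h = 0$, i.e. $C/G\cong\mathbb{P}^1$, then $p = 2$, and finally exactly four branch points; the residue of $d'$ modulo $2$ should then split the admissible solutions into the two families listed in the statement. I expect the main obstacle to be precisely this final case analysis: the numerical equation $\dim\mathcal{T}_{g;d'} = \dim\mathcal{T}_{g;d}$ has several integer solutions, and one must discard those that violate the product-one or the generation condition (hence do not arise from a connected cover) and confirm that the surviving families really carry a faithful $\mathbb{Z}/d$-action with $g\ge 2$. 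The delicate point throughout is the hypothesis $G=\Aut(C)$: fullness is what guarantees that $V_0$ is not artificially enlarged by hidden symmetries and that the strata being compared are genuinely maximal, and it is here that I would invoke the structural description of non-full cyclic strata from \cite{MSSV02} recalled in Proposition \ref{nonfull}.
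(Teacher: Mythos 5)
Your proposal is correct in outline but follows a genuinely different route from the paper. The paper does not decompose $H^1(C,\Theta_C)$ into $\gamma$-eigenspaces or run Riemann--Hurwitz on $C/G'\to C/G$; instead it argues softly: if the two dimensions agree then the irreducible loci $\mathcal{T}_{g;d,[(k_i)]}\subseteq\mathcal{T}_{g;d',[(k'_i)]}$ coincide, so the general curve carrying the $G'$-structure has strictly larger automorphism group, which is exactly the situation classified in \cite{MSSV02}, Lemma 4.1; the proof then walks through the cases of that lemma, discarding all but case III-b by the admissibility conditions of Definition \ref{adm} (case III-c is killed because $\Aut(C)$ is cyclic), and extracts the two families from the signature $(2,2,c_3,c_4)$ over $\mathbb{P}^1$ together with the requirement that $C/G'\to\mathbb{P}^1$ be branched at exactly two points. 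Your approach buys self-containedness and, notably, does not need the hypothesis $G=\Aut(C)$ for the numerical comparison at all: $\dim V_0=3h-3+k$ whatever $\Aut(C)$ is, so your closing claim that fullness ``guarantees $V_0$ is not artificially enlarged'' mislocates the role of that hypothesis --- in the paper it serves only to identify $\Aut(C)$ with $G$ when invoking \cite{MSSV02}, and Proposition \ref{nonfull} (which concerns $\Aut(C)/G\simeq(\mathbb{Z}/2)^2$, i.e.\ the case the paper \emph{excludes}) is not needed anywhere in your argument. The paper's approach buys brevity at the price of an external black box and of only producing \emph{necessary} conditions from MSSV, whereas your per-branch-point bookkeeping (each branch point contributes a strictly positive quantity to $\dim\mathcal{T}_{g;d'}-\dim\mathcal{T}_{g;d}$, forcing $h=0$, then $p=2$, then at most six branch points, with the sum-zero and generation conditions eliminating almost everything) yields an exact criterion and in particular lets one verify or correct the explicit sequences listed in the statement. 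The two genuine debts in your plan are the ones you already flag: the closed formula and the terminal case analysis are only sketched, and the reduction to prime index leaves the composite-index step to be ruled out inside the rigid configurations you obtain; both are routine but are where all the content lives.
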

\begin{proof}
	If $dim_\mathbb{C}\mathcal{T}_{g;d,[(k_1,...,k_{d-1})]}=dim_\mathbb{C} \mathcal{T}_{g;d',[(k'_1,...,k'_{d'-1})]}$, then we have that $\mathcal{T}_{g;d,[(k_1,...,k_{d-1})]}=\mathcal{T}_{g;d',[(k'_1,...,k'_{d'-1})]}$. Now since the condition of \cite{MSSV02}, Lemma 4.1 is satisfied, we see that the pair $(C,G')$ must be one of the cases there.\\
	By assumption $dim_\mathbb{C} \mathcal{T}_{g;d',[(k'_1,...,k'_{d'-1})]}\geq 1$, hence only the cases $I,II,III$ in \cite{MSSV02}, Lemma 4.1 happen.\\
	Case $III-c$ is excluded since here $Aut(C)$ is a cyclic group, which can not have a quotient group isomorphic to $(\mathbb{Z}/2)^2$.\\
	For the remaining cases, we have $d=2d'$ and $C/Aut(C)\simeq \mathbb{P}^1$.
	The Cases $I$, $II$, and $III-a$ are excluded since in these cases the conditions $(1)$ and $(2)$ of Definition \ref{adm} can not be simultaneously satisfied.\\
	For case $III-b$, $C\rightarrow C/Aut(C)$ has four branching points $P_1,P_2,P_3,P_4$ with branching sequence $(2,2,c_3,c_4)$ such that $2<c_3\leq c_4$. For $1\leq j \leq 4$, let  $i_j$ be the index such that $P_j\in D_{i_j},$ we have that $d/\gcd(d,i_1)=d/\gcd(d,i_2)=2$, $d/\gcd(i_3,d)=c_3$ and $d/\gcd(i_4,d)=c_4$. Hence we get that $i_1=i_2=d'$. Moreover $i_1,i_2,i_3,i_4$ should satisfy conditions $(1)$ and $(2)$ of Definition \ref{adm}: Condition (1) says that $d|(d'+d'+i_3+i_4)$, which implies that $d|(c_3+c_4)$;
	Condition (2) says that $\gcd(d,d',c_3,c_4)=1$. There are two possibilities: $$[(k_1,...,k_{d-1})]=[(1,0,...,0,k_{d'}=2,0,...,0,1)]$$ or $$2\nmid d',\ [(k_1,...,k_{d-1})]=[(0,1,0,...,0,k_{d'}=2,0,...,0,1,0)].$$ 
	Noting that there is one more restriction in case $III-b$ that $C/G'\rightarrow \mathbb{P}^1$ is a double cover branched in two points on $\mathbb{P}^1$, we see that if $$[(k_1,...,k_{d-1})]=[(1,0,...,0,k_{d'}=2,0,...,0,1)],$$ then we must have $2|d'$, otherwise $C/G'\rightarrow \mathbb{P}^1$ is branched on four points on $\mathbb{P}^1$; in this case $$[(k'_1,...,k'_{d'-1})]=[(1,0,...,0,k_{d'/2}=2,0,...,0,1)].$$
	 For the other case where $2\nmid d'$ and  $$[(k_1,...,k_{d-1})]=[(0,1,0,...,0,k_{d'}=2,0,...,0,1,0)],$$ we get $[(k'_1,...,k'_{d'-1})]=[(1,0,..,0,1)]$.
\end{proof}  
\begin{remark}
Let us look at the case when $dim\mathcal{T}_{\beta}=dim \mathcal{T}_{C'}$: first $\mu_o(\beta)=1$ for all $o\in Orb$, which means that $\overline{\beta_o}$ acts transitively on the vertices in $o$. 

For the subcurve $C/(\beta)(o)$, if we are not in the exceptional cases, then we have $(C/\langle\beta\rangle)(o)\simeq C'_o$ and $\beta_o^{n_{o}}|_{C^{(1)}_o}=\prod^{n_o}_{k=1}\beta_{o,k}$ is a generator of $Aut(C^{(1)}_o)$.  Otherwise $C'_o$ is rational and $C/\langle {\beta}\rangle (o)\rightarrow C'_o$ is a double cover and $\langle \prod^{n_o}_{k=1}\beta_{o,k} \rangle $ is the (unique) index $2$ subgroup of $Aut(C^{(1)}_o)$ which arises from the exceptional cases in Proposition \ref{exception}.
\end{remark}
What remains to determine is that under the condition $dim\mathcal{T}_{\beta }=dim \mathcal{T}_{C'}$, when is $(C,\beta)$ $G$-equivariantly non-smoothable? 
Here we apply Proposition \ref{nonsm} to a node $p\in \mathcal{N}_{i_1}\cup\mathcal{N}_{i_2}$. We have two cases: $i_1=i_2$ and $i_1\neq i_2$, which we treat separately.\\

\smallskip
Case (I)\\
If $i_1=i_2=:i$, we must have $G_p\subset G_i$. Denoting by $\{p_1,p_2\}$ the inverse image of $p$ of the normalization map $\widetilde{C_i}\rightarrow C_i$, we have the following easy lemma:
\begin{lemma}\label{loopnode}
   For any $g\in H_i$, (regarding $g$ also as an automorphism of $\widetilde{C_i}$,) there are three possibilities:
	\begin{itemize}
		\item[(a)] $g(p_i)=p_i$ for $i=1,2$.
		\item[(b)] $g(p_1)=p_2$ and $g(p_2)=p_1$.
		\item[(c)] $g(p_1)=p'_1$ and $g(p_2)=p'_2$, where $\{p_1',p_2'\}$ is the inverse image of $g(p)(\neq p)$.
	\end{itemize}
\end{lemma}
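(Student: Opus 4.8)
The plan is to derive the trichotomy purely from the universal property of normalization and the fact that an automorphism permutes the nodes of a curve. Write $\nu\colon\widetilde{C_i}\to C_i$ for the normalization map. First I would note that since $g\in H_i=G_i/G''_i$ acts as a genuine automorphism of $C_i$, it lifts uniquely to an automorphism $\widetilde{g}$ of $\widetilde{C_i}$ satisfying $\nu\circ\widetilde{g}=g\circ\nu$; indeed, because $\widetilde{C_i}$ is normal the composite $g\circ\nu\colon\widetilde{C_i}\to C_i$ factors uniquely through $\nu$, and applying the same reasoning to $g^{-1}$ shows that the resulting endomorphism is an automorphism. It is this $\widetilde{g}$ that is meant by ``regarding $g$ as an automorphism of $\widetilde{C_i}$''. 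Because we are in Case (I), the node $p$ is a self-node of $C_i$ (that is, $p\in\mathcal{N}^{(2)}_i$), so its fibre $\nu^{-1}(p)=\{p_1,p_2\}$ consists of exactly the two points lying on the two branches.

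The statement then follows from the equivariance of fibres: for every $q\in C_i$ one has $\widetilde{g}\bigl(\nu^{-1}(q)\bigr)=\nu^{-1}(g(q))$. Applying this with $q=p$, the analysis splits according to whether $g$ fixes the node $p$ or not, which is a genuine dichotomy since $g$ permutes the finite set of self-nodes of $C_i$. If $g(p)=p$, then $\widetilde{g}$ restricts to a permutation of the two-element set $\{p_1,p_2\}$, and a permutation of two elements is either the identity, giving possibility (a), or the transposition, giving possibility (b). If instead $g(p)=p'\neq p$, then $p'$ is again a self-node of $C_i$ (as $g$ preserves the node structure), its fibre $\nu^{-1}(p')=\{p_1',p_2'\}$ has two elements, and the equivariance of fibres forces $\widetilde{g}$ to carry $\{p_1,p_2\}$ bijectively onto $\{p_1',p_2'\}$; relabelling the images as $p_1'=\widetilde{g}(p_1)$ and $p_2'=\widetilde{g}(p_2)$ gives possibility (c).

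The hard part is essentially nonexistent here: the only points deserving a word of care are the existence and uniqueness of the lift $\widetilde{g}$, which is standard functoriality of normalization, and the observation that the three subcases are exhaustive, which holds precisely because a node is either fixed, contributing the two permutations (a) and (b), or moved, contributing (c).
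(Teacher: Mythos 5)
Your proof is correct and is exactly the standard argument the paper has in mind --- the paper's own proof consists of the single word ``Obvious,'' and your write-up (lift $g$ to the normalization by the universal property, then use equivariance of fibres and split on whether the self-node $p$ is fixed or moved) is the natural elaboration of that. No gaps; the level of detail is more than sufficient.
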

\begin{proof}
	Obvious.
\end{proof}
We apply Proposition \ref{nonsm} to $(C_i,H_i=\langle\overline{\gamma^{n_i}|_{C_i}} \rangle)$ and $(C_i,\langle \overline{\beta^{n_i}|_{C_i}} \rangle)$. If $\langle \overline{\beta^{n_i}|_{C_i}} \rangle=H_i$, then it is clear that $p$ is non-smoothable for $(C_i,\langle \overline{\beta^{n_i}|_{C_i}} \rangle)$ and hence non-smoothable for $(C,\langle\beta \rangle)$. \\
Assume we are in the exceptional cases of Proposition \ref{exception}. First we consider exceptional case (1), recall that $\pi_i:\widetilde{C_i}\rightarrow \widetilde{C_i}/H_i$ is branched on four points $P_1,...,P_4$ with branching indices $(2,2,d_i,d_i)$ with $d_i\geq 4$. Note that $p_1$ or $p_2$ does not belong to either $\pi^{-1}_i(P_1)$ or $\pi^{-1}_i(P_2)$:

 Case (a) in Lemma \ref{loopnode} does not occur since $\#\pi^{-1}_i(P_1)=\#\pi^{-1}_i(P_2)=d_i/2\geq 2$. 
 
 For the same reason, if $d_i\geq 6$, then Case (b) does not occur; if $d_i=4$, Case (b) does not occur, either, otherwise $p$ is $H_i$-equivariantly smoothable. 
 
 Case (c) does not occur, otherwise $p$ is $H_i$-equivariantly smoothable.
\medskip  

Hence we may assume $p_1=\pi^{-1}_i(P_3)$ and $p_2=\pi^{-1}_i(P_4)$. Let $z_j$ be a local coordinate near $p_j$, j=1,2, the action of $H_i$ near $p$ is
 $$\overline{\gamma_i^{n_i}}:z_1\mapsto \zeta_{d_i}z_1,\ z_2\mapsto \zeta_{d_i}^{-1}z_2.$$ 
 This implies that $p$ is $H_i$-equivariantly smoothable, a contradiction. Therefore we see that exceptional case $(1)$ does not occur.

Using a similar argument we see that exceptional case (2) does not occur, either.\\

Case (II) $i_1\neq i_2$.\\
 We have two subcases:
\begin{itemize}
	\item[(i)] $G_p$ fixes $i_1$ and $i_2$ respectively.
	\item[(ii)] $G_p$ exchanges  $i_1$ with $i_2$.
\end{itemize}
Subcase (i):\\
 Let $x$ (resp. $y$) be a local parameter on $C_{i_1}$ (resp. on $C_{i_2}$) near $p$. Denoting by $a_l$ the smallest positive integer such that $\gamma^{n_{i_l}a_l}(p)=p$ for $l=1,2$ (note that we necessarily have $ n_{i_1}a_1=n_{i_2}a_2$), then locally the action of $\overline{\gamma^{n_{i_l}a_l}}$ around $p$ is given by $(x,y)\mapsto (\zeta_{p,1}^{b_1}x,\zeta_{p,2}^{b_2}y)$ for some natural numbers $b_1,b_2$, where $\zeta_{p,l}$ is a primitive $n''_{i_l}/(n_{i_l}a_l)$-th root of unity and $n''_{i_l}:=d/|G''_{i_l}|$ (we require that $b_1,b_2\leq n''_{i_l}/(n_{i_l}a_l)$). The condition that $(C,G,\rho)$ is non-smoothable implies that $\zeta_{p,1}^{b_1}\zeta_{p,2}^{b_2}\neq 1$. 

By our assumption $(1)$ we have $\overline{\beta^{n_{i_l}}|_{C_{i_l}}}\in \Aut(C_{i_l})=\langle\overline{\gamma^{n_{i_l}}|_{C_{i_l}}}\rangle$, hence we get that $\overline{\beta^{n_{i_l}}|_{C_{i_l}}}=(\overline{\gamma^{n_{i_l}}})^{c_l}$ for some $0\leq c_l< Ord(H_{i_l})$. By Proposition \ref{exception} we have two possibilities: 
\begin{itemize}
	\item  $\langle \overline{\beta^{n_{i_l}}|_{C_{i_l}}}\rangle=\Aut(C_{i_l})$, which is equivalent to $\gcd(c_l,Ord(H_{i_l}))=1$.
	\item  We are in the exceptional cases where $\langle \overline{\beta^{n_{i_l}}|_{C_{i_l}}}\rangle$ is the index 2 subgroup of $\Aut(C_{i_l})$, and $c_l=2c'_l$ for some $0\leq c'_l<Ord(H_{i_l})/2$ with $\gcd(c'_l,Ord(H_{i_l}))=1$.
\end{itemize}
   The action of $\overline{\beta^{n_{i_l}a_l}}$ is given by $(x,y)\rightarrow (\zeta_{p,1}^{b_1c_1}x,\zeta_{p,2}^{b_2c_2}y)$. We see easily that $p$ is non-smoothable for $(C_i,\langle \overline{\beta^{n_i}|_{C_i} }\rangle)$ iff $\zeta_{p,1}^{b_1c_1}\zeta_{p,2}^{b_2c_2}\neq 1$.\\
Subcase (ii):\\
Observe that $i_1$ and $i_2$ lie in the same orbit, hence we have $d_{i_1}=d_{i_2}=2Ord(G_p)$. The situation here is similar to that of case (I), we get that $p$ is non-smoothable for $(C_i,\langle \overline{\beta^{n_i}|_{C_i} }\rangle)$ and leave the details to the reader.\\

Now we fix the local parameters for each nodes where subcase $(II-i)$ happens, then we obtain an unordered pair $(\zeta_{p,1}^{b(p,1)},\zeta_{p.2}^{b(p,2)})$ at each node $p$ which is determined by $\gamma$. 
For any $\beta\in \Aut(C)$ with degree $d$ such that $dim\mathcal{T}_{\beta}=dim \mathcal{T}_{C'}$, we get a pair of integers $(c(\beta,p,1),c(\beta,p,2))$ at each node.

 Combining with the previous argument, we obtain our main theorem:
\begin{theorem}\label{mainthm}
		 Under the conditions of Assumption \ref{assum}, we have the following:
		\begin{itemize}
			\item[(1)] For a $G$-equvariantly non-smoothable $G$-marked stable curve $(C=\sum_{i\in I}C_i,G,\rho)$, the induced stratum $\mathfrak{M}_{C'}$, where $C'=C/G$, is maximal iff for a general stable curve (by abuse of notation we denote still by) $(C,G,\rho)$ in the stratum:
			\begin{itemize}
				\item[(a)] The cases in Lemma \ref{case 1} do not occur.
				\item[(b)]  For any $\beta\in \Aut(C)$ (of order $d$) and any node $p$ where Case (II-i) happens, the following holds:
				$$\zeta_{p,1}^{b(p,1)c(\beta,p,1)}\zeta_{p,2}^{b(p,2)c(\beta,p,2)}\neq 1.$$
			\end{itemize}
			\item[(2)]The Zariski closure of each maximal stratum in $(1)$ is an irreducible component of $(\overline{\mathfrak{M}_g}-\mathfrak{M}_g)(G)$.
		\end{itemize}
	\end{theorem}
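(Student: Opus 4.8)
The plan is to reduce the maximality of $\mathfrak{M}_{C'}$ to the non-occurrence of the two scenarios isolated in Remark \ref{Two cases}, and then to translate each scenario into conditions (a) and (b). First I would recall that, by the observation following the definition of a maximal $G$-action, the stratum $\mathfrak{M}_{C'}$ is maximal precisely when no subgroup $\langle\beta\rangle\subset\Aut(C)$ with $\beta$ of order $d$ yields either a strictly larger stratum or a stratum of the same dimension whose general member is $G$-equivariantly smoothable. By Lemma \ref{fac} every such $\beta$ satisfies $\mathfrak{M}_{C'}\subset\mathfrak{M}_\beta$, so that $\dim\mathfrak{M}_{C'}\le\dim\mathfrak{M}_\beta$ always holds; hence the failure of maximality is exactly the occurrence, for some $\beta$ of order $d$, of case (1) or case (2) of Remark \ref{Two cases}.

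For condition (a) I would invoke Lemma \ref{case 1}: case (1), the existence of a $\beta$ with $\dim\mathfrak{M}_\beta>\dim\mathfrak{M}_{C'}$, occurs iff some orbit $o$ falls into one of the subcases (a)--(c) of that lemma. Thus ``the cases of Lemma \ref{case 1} do not occur'' is equivalent to $\dim\mathfrak{M}_\beta=\dim\mathfrak{M}_{C'}$ for every $\beta$ of order $d$; in particular, under (a) each such $\beta$ has $\mu_o(\beta)=1$ on every orbit, and $C/\langle\beta\rangle$ and $C'$ have the same components and dimension. For condition (b), assuming (a), I would fix $\beta$ of order $d$ and decide, node by node, when $(C,\langle\beta\rangle)$ is $G$-equivariantly non-smoothable by applying Proposition \ref{nonsm} along the trichotomy Case (I)/(II-i)/(II-ii) established before the theorem. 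The preceding analysis shows that Case (I) and Case (II-ii) nodes are automatically non-smoothable for $(C,\langle\beta\rangle)$ (in Case (I) the exceptional cases of Proposition \ref{exception} are excluded by the local form of the $H_i$-action, and Case (II-ii) is handled identically), whereas a Case (II-i) node $p$ is non-smoothable iff $\zeta_{p,1}^{b(p,1)c(\beta,p,1)}\zeta_{p,2}^{b(p,2)c(\beta,p,2)}\neq 1$. Consequently $(C,\langle\beta\rangle)$ is non-smoothable iff this inequality holds at every Case (II-i) node, and quantifying over all $\beta$ of order $d$ shows that the non-occurrence of case (2) is exactly condition (b). Combining the two steps gives the equivalence in part (1).

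For part (2), by Theorem \ref{thm1} the closure $\overline{\mathfrak{M}_{C'}}$ is an irreducible Zariski-closed subset of $(\overline{\mathfrak{M}_g}-\mathfrak{M}_g)(G)$, and for fixed $g$ there are only finitely many numerical types, so the locus is a finite union of such closures and its irreducible components are those closures maximal under inclusion. If $\overline{\mathfrak{M}_{C'}}\subseteq\overline{\mathfrak{M}_{C''}}$ for another stratum, then $\dim\mathfrak{M}_{C'}\le\dim\mathfrak{M}_{C''}$, and maximality of $\mathfrak{M}_{C'}$ forces equality of dimensions; an inclusion of irreducible closed sets of equal dimension is an equality, so $\overline{\mathfrak{M}_{C'}}$ is not properly contained in any stratum closure and is therefore an irreducible component.

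I expect the main obstacle to be the node-by-node bookkeeping in the second step: matching the local monodromy data $(\zeta_{p,l},b(p,l),c(\beta,p,l))$ to the $\mathbb{C}^*$-character criterion of Proposition \ref{nonsm}, and verifying \emph{uniformly over all} $\beta$ of order $d$ that the exceptional double-cover cases of Proposition \ref{exception} never create a new equivariant smoothing at Case (I) or Case (II-ii) nodes. Everything else is assembly of the lemmas already proved.
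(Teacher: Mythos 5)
Your proposal is correct and follows essentially the same route as the paper: the paper's proof is precisely the assembly of Remark \ref{Two cases}, Lemma \ref{case 1} (for condition (a)), and the node-by-node analysis of Cases (I), (II-i), (II-ii) via Proposition \ref{nonsm} (for condition (b)), with part (2) following from Theorem \ref{thm1} and the finiteness of numerical types exactly as you describe. The only difference is that you spell out the dimension argument for part (2) slightly more explicitly than the paper does, which is a welcome clarification rather than a deviation.
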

\begin{remark}
	The above argument also shows that, unlike in the smooth case (cf. \cite[Theorem 1]{Cor87} and \cite[Theorem 3.4]{Cat12}), for stable curves, usually a component corresponds to more than one numerical types.
\end{remark}
\section*{Acknowlegement}
The author is currently supported by Shanghai Center for Mathematical Science (SCMS), Fudan University. Part of this work took place in the framework of the ERC Advanced grant n. 340258, `TADMICAMT'.

The author would like to thank Fabrizio Catanese for suggesting this topic and for many helpful discussions.
\bibliographystyle{alpha}

\begin{thebibliography}{COT13}
	\bibitem[Cat12]{Cat12}
	F. Catanese.
	{\em Irreducibility of the space of cyclic covers of algebraic curves of fixed numerical type and the irreducible components of $Sing(\overline{\mathfrak{M}}_g)$. } 
	Advances in geometric analysis, 281-306, Adv. Lect. Math. 21, Int. Press, Somerville, MA, 2012.
	\bibitem[Cat13]{Cat13}
	F. Catanese. {\em A superficial working guide to deformations and moduli.}
	Handbook of moduli. Vol.I, 161-215, Adv. Lect. Math. (ALM), 24, Int. Press, Somerville, MA, (2013)
	\bibitem[Cat15]{Cat15}
	F. Catanese. {\em Topological methods in moduli theory.} 
	Bull. Math. Sciences, Springer Basel, 287-449 (2015). DOI: 10.1007/s13373-015-0070-1	
	\bibitem[Cor87]{Cor87}
	M. Cornalba.
	{\em On the locus of curves with automorphisms.}
	Ann. Mat. Pura Appl. (4) 149 (1987), 135-151.
	\bibitem[Cor08]{Cor08}	
	M. Cornalba. {\em Erratum: On the locus of curves with automorphisms.} Ann. Mat. Pura Appl. (4) 187, No.1, 185-186 (2008)
	\bibitem[CLP11]{CLP11}
	F. Catanese, M. L{\"o}nne, F. Perroni. 
	{\em Irreducibility of the space of dihedral covers of algebraic curves of fixed numerical type.}
	Atti Accad. Naz. Lincei Cl. Sci. Fis. Mat. Natur. Rend. Lincei (9) 
	Mat. Appl. 22 (2011), 1-19. arXiv:1301.4409
	\bibitem[CLP15]{CLP15}
	F. Catanese, M. L{\"o}nne, F. Perroni. {\em The Irreducible components of the moduli space of dihedral covers of algebraic curves.} Groups, Geometry and Dynamics, Volume 9, Issue 4, 2015, pp.1131-1152.
	\bibitem[Har77]{Har77}
	R.Hartshorne. {\em Algebraic geometry.} Graduate Texts in Mathematics, No. 52. Springer-Verlag, New York, 1977. xvi+496 pp.
	\bibitem[Li16]{Li16}
	B. Li. {\em Moduli spaces of varieties with symmetries.} PhD thesis, University of Bayreuth.  urn:nbn:de:bvb:703-epub-2970-4. (2016) 1-72. 
	\bibitem[LW16]{LW16}
	B. Li, S. Weigl. {\em The locus of curves with $D_n$-symmetry inside $\mathfrak{M}_g$.} Rendiconti del Circolo Matematico di Palermo (1952- ), 65(1), 2016, 33-45. 
	\bibitem[MSSV02]{MSSV02}
	K. Magaard, T. Shaska, S. Shpectorov, H. V{\"o}lklein,
	{\em The locus of curves with prescribed automorphism group.}
	Communications in arithmetic fundamental groups (Kyoto, 1999.2001).
	S$\bar{u}$rikaisekikenky$\bar{u}$sho K$\bar{o}$Ky$\bar{u}$roku No. 1267(2002), 112-141.
	\bibitem[Mum66]{Mum66}
	D. Mumford. {\em Lectures on curves on an algebraic surface.} Annals of Mathematics Studies. 59 Princeton University Press, Princeton, N.J. 1966 xi+200 pp.
	\bibitem[Par91]{Par91}
	R. Pardini. {\em Abelian covers of algebraic varieties.} J. Reine Angew. Math. 417 (1991), 191-213.
	
	
	
	
	
\end{thebibliography}

\end{document}